\newcommand{\C}{\mathbb C}
\newcommand{\R}{\mathbb R}
\newcommand{\N}{\mathbb N}
\newcommand{\Z}{\mathbb Z}
\def\({\left(}
\def\){\right)}
\def\<{\left\langle}
\def\>{\right\rangle}
\def\d{{\partial}}
\def\eps{\varepsilon}
\def\le{\leqslant}
\def\ge{\geqslant}
\DeclareMathOperator{\RE}{Re}
\DeclareMathOperator{\IM}{Im}
\def\Tend#1#2{\mathop{\longrightarrow}\limits_{#1\rightarrow#2}}
\theoremstyle{plain}
\newtheorem{theorem}{Theorem} [section]
\newtheorem{lemma}[theorem]{Lemma}
\newtheorem{corollary}[theorem]{Corollary}
\newtheorem{proposition}[theorem]{Proposition}
\theoremstyle{remark}
\newtheorem{remark}[theorem]{Remark}
\theoremstyle{definition}
\def\Tend#1#2{\mathop{\longrightarrow}\limits_{#1\rightarrow#2}}
\numberwithin{equation}{section}
\begin{document}

\title[Vortex states in NLS with LHY correction]{Self-Bound vortex states in nonlinear Schr\"odinger equations with LHY correction}  

\author[A. K. Arora]{Anudeep K. Arora}
\address{Department of Mathematics, Statistics, and
	Computer Science\\
	University of Illinois at Chicago\\
	851 S. Morgan Street
	Chicago\\ IL 60607, USA}
\email{anudeep@uic.edu}

\author[C. Sparber]{Christof Sparber}
\address{Department of Mathematics, Statistics, and
  Computer Science\\
  M/C 249\\
University of Illinois at Chicago\\
851 S. Morgan Street
Chicago\\ IL 60607, USA} 
\email{sparber@uic.edu}

\begin{abstract}
We study the cubic-quartic nonlinear Schr\"odinger equation (NLS) in two and three spatial dimension. This equation arises in 
the mean-field description of Bose-Einstein condensates with Lee-Huang-Yang correction. We first prove 
global existence of solutions in natural energy spaces which allow for the description of self-bound 
quantum droplets with vorticity. Existence of such droplets, described as central vortex states in 2D and 3D, 
is then proved using an approach via constrained energy minimizers. 
A natural connection to the NLS with repulsive inverse-square potential in 2D arises, leading 
to an orbital stability result under the corresponding flow.
\end{abstract}

\date{\today}

\subjclass[2010]{35Q55, 35A01}
\keywords{Nonlinear Schr\"odinger equation, solitary waves, vortices, LHY correction}

\thanks{This publication is supported by the MPS Simons foundation through award no. 851720}
\maketitle


\section{Introduction}\label{sec:intro}

In recent physics experiments on Bose-Einstein condensates, a novel type of self-bound quantum state, referred to as {\it quantum droplets}, has been formed with the help of 
quantum fluctuations derived from collective Bogoliugov-modes, see \cite{CTSN, LZLYM2021}. These fluctuations can then be described in leading order by the {\it Lee-Huang-Yang} (LHY) 
correction \cite{LHY}, whose rigorous derivation has been discussed in, e.g.,  \cite{Brietzke2020, Fournais2019, FoSo2020}. 
Of particular interest is the fact that such quantum droplets can carry intrinsic {\it vorticity}, a feature which is only possible in 
spatial dimensions two and higher, cf. \cite{CSHM, LCLHTPW, LXCYML}. 
Higher dimensional self-bound quantum states (or solitary wave solutions), are usually strongly unstable due to the possibility of finite-time blow-up, see, e.g., \cite{CazCourant}. The 
inclusion of the LHY correction is a remedy against this unwanted effect and consequently allows for the description of stable quantum droplets. 
In this paper, we shall discuss a possible mathematical description of such solitary waves with non-zero vorticity in dimensions $d=2,3$.

On the level of the associated mean-field model, 
Bose-Einstein condensates are described by a nonlinear Schr\"odinger equation (NLS), governing the dynamics
condensate wave function $u$. In the case with LHY correction this yields an NLS model with 
competing {\it cubic-quartic nonlinearity}:
\begin{equation}\label{eq:nls}
\left\{
\begin{aligned}
 & i\d_t u +\frac{1}{2}\Delta u = - |u|^2u +  |u|^3u,\quad  t \in \R,\ x\in \R^d,\\
  & u_{\mid t=0}  = u_0(x),
\end{aligned}  
\right.
\end{equation}
where $d=2,3$.
Here, the cubic term describes the usual attractive mean-field interaction while the LHY correction yields a repulsive quartic term. Note that, 
during their lifespan, solutions to \eqref{eq:nls} formally conserve the {\it total energy}  
\begin{equation}\label{eq:energy}
  E(u) := \frac{1}{2}\|\nabla u\|_{L^2(\R^d)}^2 -\frac{1}{2}\|u\|_{L^4(\R^d)}^4 + \frac{2}{5} \|u\|_{L^5(\R^d)}^5,
\end{equation}
along with their {\it mass}, {\it momentum}, and {\it angular momentum}, i.e.,
\begin{equation}\label{eq:mass}
  M(u):=\int_{\R^d}|u|^2\, dx,\quad P(u) := \int_{\R^d}\IM \overline u \nabla u\, dx,\quad L(u) = \langle u, L_z u \rangle_{L^2}.
\end{equation}
Here, we have assumed w.r.o.g. that rotations occurs around the $z$-axis, in which case the angular momentum operator $L_z$ is given by 
\begin{equation}\label{eq:angular}
L_zu:= i (x_1 \partial_{x_2} u -x_2 \partial_{x_1} u).
\end{equation}
Note that in polar coordinates $(x_1, x_2)\mapsto (r,\theta)\in \R^2$ 
and in cylindrical coordinates $(x_1, x_2, z)\mapsto (r,\theta, z)\in \R^3$, we simply have \[L_z u = i \partial_\theta u.\] 
We say that a quantum state $u\in L^2(\R^d)$ {\it has non-zero vorticity} whenever $L(u)\not =0$.

The LHY correction is known to be able to arrest the collapse of attractive Bose gases induced by the cubic mean-field force \cite{PhysRevLett.115, PhysRevLett117}. 
Indeed, when combined with H\"older's inequality, the defocusing nature of the quartic nonlinearity 
\begin{equation}\label{eq:L4normHolder}
	\|u\|_{L^4(\R^d)}^4 \le \|u\|_{L^2(\R^d)}^{2/3}\|u\|_{L^5(\R^d)}^{10/3},
\end{equation} 
shows that the total energy is bounded below. This suggests that the occurrence of finite time blow-up due to the focusing cubic part is 
no longer a possibility (a fact we shall rigorously prove below).

This paper is now organized as follows: In Section \ref{sec:Cauchy}, we shall prove global well-posedness of solutions $u$ to the Cauchy problem \eqref{eq:nls} 
in the physical energy spaces $H^1(\R^d)$ and also in the conformal space $\Sigma$, which is needed 
to define the angular momentum $L(u)$. 
In Section \ref{sec:m-vortexstates}, we introduce a class of stationary solution to \eqref{eq:nls} with non-zero vorticity (called central vortex states) which are proportional to 
eigenfunctions of $L_z$. We collect some of their basic properties, in particular their range of admissible frequencies. Existence in $d=2$ is then proved in Section \ref{sec:minimizer2d}, by means of a constrained energy-minimization problem. In Section \ref{sec:inverse}, we shall show that these vortex states 
can also be interpreted as stationary solutions to yet another equation. Namely, a nonlinear Schr\"odinger equation with repulsive inverse-square potential. The fact that they are energy minimizers then allows us to prove orbital stability under the flow with inverse-square potential (in the case of radially symmetric initial data). In turn this also yields 
orbital stability under the original flow in the case without vorticity.
In view of these results, we conjecture that the nature of instability of such central vortex states can 
be described by the effect of a repulsive inverse-square potential. Finally, we shall prove existence of (cylindrically symmetric) 
central vortex states in 3D in Section \ref{sec:minimizer3d}, where we also add some concluding remarks.


\section{Cauchy problem}\label{sec:Cauchy}

In this section, we study the Cauchy problem associated to \eqref{eq:nls}, when rewritten through Duhamel's formula, i.e.,
\begin{equation}\label{eq:duhamel}
u(t) = e^{i\frac{t}{2} \Delta } u_0 - i  \int_0^t e^{i\frac{t-s}{2} \Delta} f(u)(s) \, ds,
\end{equation}
where here and in the following, we denote the nonlinearity by
\[
f(z)= -|z|^2 z + |z|^3 z, \quad z\in \C.
\]
Recall that for three-dimensional Schr\"odinger equation, a {\it Strichartz-pair} is {\it admissible} if
\[
\frac{2}{q} + \frac{3}{r} = \frac{3}{2},\quad 2\le r\le 6,
\]   
and in a two-dimensional setting, the admissibility is given by
\[
\frac{2}{q} + \frac{2}{r} = 1,\quad 2\le r< \infty.
\]
We use the following admissible Strichartz-pairs:
\begin{equation}\label{eq:stri1}
(q_1,r_1) := \left\{
\begin{aligned}
  &\,\,\,\,\left(4,4\right),\quad&\text{if}\,\,\,d=2,\\
   &\left(8,\frac{12}{5}\right),\quad&\text{if}\,\,\,d=3,
\end{aligned}  
\right.
\end{equation}
and
\begin{equation}\label{eq:stri2}
(q_2,r_2) := \left\{
\begin{aligned}
  &\left(\frac{10}{3},5\right),\quad&\text{if}\,\,\,d=2,\\
   &\left(\frac{20}{3},\frac{5}{2}\right),\quad&\text{if}\,\,\,d=3.
\end{aligned}  
\right.
\end{equation}

We start by first proving local well-posedness of \eqref{eq:nls} in the natural energy space. This will be done through a fixed point argument, based on the use of Strichartz estimates. 
This approach is by now classical and follows along the same lines as in \cite{Kato1987} (see also \cite{CazCourant}).
However, due to the physical significance of the LHY correction and 
in order to make the presentation self-contained we shall provide full details on our specific choice of admissible pairs (see below).

\begin{proposition}[Local well-posedness]\label{prop:LWP}
Let $d=2,3$. For any $u_0\in H^1(\R^d)$ and the admissible Strichartz-pairs defined in \eqref{eq:stri1}, \eqref{eq:stri2}, there exist $T>0$ and a unique solution 
\[
u\in \mathcal{X} := C([0, T];H^1(\R^d))\cap L^{q_1}((0, T); W^{1,r_1}(\R^d))\cap L^{q_2}((0, T); W^{1,r_2}(\R^d)) , 
\]
to \eqref{eq:duhamel}, depending continuously on $u_0$, and 
we also have the blow-up alternative, i.e., if $T<\infty$, then
\[
\lim_{t\to T} \| u(t, \cdot)\|_{H^1(\R^d)}=\infty.
\]
\end{proposition}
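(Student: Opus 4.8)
The plan is to run a standard Kato-type fixed-point argument in the space $\mathcal X$ on a small time interval $[0,T]$, using the Strichartz estimates for the free Schr\"odinger group together with the algebraic structure of the nonlinearity $f(z) = -|z|^2 z + |z|^3 z$. First I would fix $M := \|u_0\|_{H^1}$ and set up the complete metric space
\[
\mathcal X_{T,R} := \{ u \in \mathcal X : \|u\|_{\mathcal X} \le R \},
\]
with $R \sim 2C\|u_0\|_{H^1}$ (where $C$ is the constant in the homogeneous Strichartz estimate), equipped with a suitable weaker metric — typically the $L^{q_1}((0,T);L^{r_1}) \cap C([0,T];L^2)$ distance, on which $\mathcal X_{T,R}$ is complete — and define the Duhamel map $\Phi(u)(t) := e^{i\frac t2\Delta}u_0 - i\int_0^t e^{i\frac{t-s}{2}\Delta} f(u)(s)\,ds$. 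The goal is to show $\Phi$ maps $\mathcal X_{T,R}$ into itself and is a contraction there for $T$ small, which yields the fixed point, hence the solution to \eqref{eq:duhamel}; uniqueness in all of $\mathcal X$, continuous dependence, and the blow-up alternative are then obtained by the usual bootstrap and continuation arguments.

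The heart of the matter is the nonlinear estimate: I must bound $\|\nabla f(u)\|_{L^{q'}((0,T);L^{r'})}$ for the dual exponents of the chosen admissible pairs, and control the difference $f(u)-f(v)$ similarly. Since $|\nabla f(u)| \lesssim (|u|^2 + |u|^3)|\nabla u|$, the cubic part contributes a term like $\| |u|^2 \nabla u \|$ and the quartic part a term like $\| |u|^3 \nabla u \|$. For the cubic piece I would use H\"older in space to land in $W^{1,r_1'}$ with the power $|u|^2$ measured in a Lebesgue space controlled by $L^{r_1}$ via Sobolev embedding, and then H\"older in time, picking up a positive power $T^{\alpha_1}$; this is exactly where the pair $(q_1,r_1)$ in \eqref{eq:stri1} is calibrated — in $d=2$, $(4,4)$ makes the cubic term close with room to spare, while in $d=3$, $(8,\tfrac{12}{5})$ is the scaling-subcritical choice adapted to the quintic (energy-critical in $d=3$ would be $H^1$-critical, but quartic nonlinearity is $H^1$-subcritical in $d=3$, so there is a genuine $T^{\alpha}$ gain). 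For the quartic piece I would analogously use the pair $(q_2,r_2)$ from \eqref{eq:stri2}: the power $|u|^3$ is placed in a space controlled by $W^{1,r_2}$ through Gagliardo–Nirenberg/Sobolev, and the time integration again yields $T^{\alpha_2}$ with $\alpha_2 > 0$ because the quartic nonlinearity is $H^1$-subcritical in both $d=2$ and $d=3$ ($p=4 < 1 + \tfrac{4}{d-2}$ when $d=3$, and any power is subcritical when $d=2$). Assembling these with the inhomogeneous Strichartz estimate gives $\|\Phi(u)\|_{\mathcal X} \le C\|u_0\|_{H^1} + C(T^{\alpha_1} + T^{\alpha_2})R^4$, and the contraction estimate $\|\Phi(u)-\Phi(v)\|_{*} \le C(T^{\alpha_1}+T^{\alpha_2})R^3\|u-v\|_{*}$; choosing $R = 2C\|u_0\|_{H^1}$ and then $T$ small enough that $C(T^{\alpha_1}+T^{\alpha_2})R^3 \le \tfrac12$ closes both.

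The main obstacle — really the only non-bookkeeping point — is verifying that the specific exponent pairs in \eqref{eq:stri1}–\eqref{eq:stri2} actually make the H\"older bookkeeping consistent, i.e., that there exist valid intermediate Lebesgue exponents realizing $|u|^2\nabla u \in L^{r_1'}$ and $|u|^3\nabla u \in L^{r_2'}$ via Sobolev embeddings of $W^{1,r_1}$ and $W^{1,r_2}$, with strictly positive time-powers after applying H\"older in $t$. One must check the arithmetic of the conjugate exponents separately in $d=2$ and $d=3$; in $d=3$ the $W^{1,r_2}$-norm with $r_2 = \tfrac52$ embeds into $L^{15}$, which is what is needed to absorb the quintic growth, and similarly $W^{1,r_1}$ with $r_1 = \tfrac{12}{5}$ embeds into $L^{12}$, handling the cubic term — these are precisely the endpoint checks the authors flag as wanting to present in full. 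Once the exponent arithmetic is confirmed, everything else (completeness of the metric space, the standard uniqueness/continuation/blow-up-alternative package) is routine and follows \cite{Kato1987, CazCourant} verbatim. I would present the nonlinear estimates as one or two displayed chains of inequalities, state the resulting $T^\alpha$-smallness, and invoke the Banach fixed point theorem.
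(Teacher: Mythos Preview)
Your proposal is correct and follows essentially the same route as the paper's proof: a Kato-type contraction argument on a ball in $\mathcal X$ of radius $2C\|u_0\|_{H^1}$, with the nonlinear estimates split into the cubic piece handled via $(q_1,r_1)$ and the quartic piece via $(q_2,r_2)$, each yielding a strictly positive power of $T$ from H\"older in time because both nonlinearities are $H^1$-subcritical in $d=2,3$. The only cosmetic differences are that the paper contracts in the metric $\|u-v\|_{L^{q_1}(I;L^{r_1})} + \|u-v\|_{L^{q_2}(I;L^{r_2})}$ rather than your $L^{q_1}(I;L^{r_1})\cap C([0,T];L^2)$ choice, and that its final bound reads $CT^{\theta_1}M^3 + CT^{\theta_2}M^4$ (separating the cubic and quartic contributions) rather than your lumped $C(T^{\alpha_1}+T^{\alpha_2})R^4$---and note your phrase ``quintic growth'' should read ``quartic''.
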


In view of the fact that \eqref{eq:nls} is time-reversible, we also obtain the analogous statement backward in time.

\begin{proof}
For $T>0$, to be specified later, we define 
\[
\nu(u)=\max\big\{\|u\|_{L^{\infty}(I;H^{1})},\,\,\|u\|_{L^{q_1}(I;W^{1,r_1})},\,\,\|u\|_{L^{q_2}(I;W^{1,r_2})}\big\},
\] 
and for an appropriately defined constant $M>0$, also specified later, let
\begin{align}\label{contractspace}
	B_M=\{u\in \mathcal{X}\,:\nu(u)\le M \}.
\end{align}
We prove that the following operator 
\begin{align}\label{Dope}
	\Phi(u(t))=e^{i\frac{t}{2}\Delta}u_0 - i \int_0^t e^{i\frac{t-s}{2}\Delta}f(u(s)) \,ds
\end{align} 
is a contraction on the set $B_M$. Using Strichartz estimates, we obtain
\begin{equation}
\begin{aligned}\label{Lwp1}
	\|\Phi(u(t))\|_{L^{\infty}(I;L^{2})} &+ \|\Phi(u(t))\|_{L^{q_1}(I;L^{r_1})} + \|\Phi(u(t))\|_{L^{q_2}(I;L^{r_2})} \\
	&\lesssim \| u_0\|_{L^2(\R^d)} + \||u|^2u\|_{L^{q_1^{\prime}}(I;L^{r_1^{\prime}})} + 
	 \||u|^3u\|_{L^{q_2^{\prime}}(I;L^{r_2^{\prime}})},
\end{aligned}
\end{equation}
and
\begin{equation}
\begin{aligned}\label{Lwp2}
	& \|\nabla\Phi(u(t))\|_{L^{\infty}(I;L^{2})}  + \|\nabla\Phi(u(t))\|_{L^{q_1}(I;L^{r_1})} + \|\nabla\Phi(u(t))\|_{L^{q_2}(I;L^{r_2})} \\
	& \, \lesssim \| \nabla u_0\|_{L^2(\R^d)} + \||u|^2 \nabla u\|_{L^{q_1^{\prime}}(I;L^{r_1^{\prime}})} + 
	\||u|^3 \nabla u\|_{L^{q_2^{\prime}}(I;L^{r_2^{\prime}})}.
\end{aligned}
\end{equation}
We perform the computation in 2D by taking $(q_1,r_1)$ and $(q_2,r_2)$ as in \eqref{eq:stri1} and \eqref{eq:stri2}, respectively. A similar argument applies to the 3D case. Using H\"older's and Sobolev's inequality, we get
\begin{align*}
	\||u|^2u&\|_{L^{4/3}(I;L^{4/3})} + 
	 \||u|^3u\|_{L^{10/7}(I;L^{5/4})} \\
	&\lesssim T^{\theta_1}\|\nabla u\|^2_{L^{\infty}(I;L^{2})} \|u\|_{L^{4}(I;L^{4})} 
	+ T^{\theta_2}\|\nabla u\|^3_{L^{\infty}(I;L^{2})} \|u\|_{L^{10/3}(I;L^{5})},\end{align*}
	 as well as
\begin{align*}
	\||u|^2 &\nabla u\|_{L^{4/3}(I;L^{4/3})} + 
	\||u|^3 \nabla u\|_{L^{10/7}(I;L^{5/4})}\\
	&\lesssim T^{\theta_1}\|\nabla u\|^2_{L^{\infty}(I;L^{2})}\|\nabla u\|_{L^{4}(I;L^{4})} 
	+ T^{\theta_2}\|\nabla u\|^3_{L^{\infty}(I;L^{2})}\|\nabla u\|_{L^{10/3}(I;L^{5})},
\end{align*}
where $\theta_1=\frac{1}{2}$ and $\theta_2=\frac{2}{5}$.
Combining \eqref{Lwp1} and \eqref{Lwp2} together with the last two estimates, we obtain
\begin{align*}
	 \|\Phi(u(t))\|_{L^{\infty}(I;H^1)} \: +  &\:\|\Phi(u(t))\|_{L^{4}(I;W^{1,4})} + \|\Phi(u(t))\|_{L^{10/3}(I;W^{1,5})}\\
	 &\lesssim \|u_0\|_{H^1(\R^2)}+T^{\theta_1}\|\nabla u\|^2_{L^{\infty}(I;L^{2})} \|u\|_{L^{4}(I;W^{1,4})} \\
	 &\qquad\qquad\qquad+T^{\theta_2}\|\nabla u\|^3_{L^{\infty}(I;L^{2})} \|u\|_{L^{10/3}(I;W^{1,5})}.	
	 \end{align*}
	 Similarly, for $d=3$, we obtain 
	  \begin{align*}
	\|\Phi(u(t))\|_{L^{\infty}(I;H^1)} \: +  &\:\|\Phi(u(t))\|_{L^{q_1}(I;W^{1,r_1})} + \|\Phi(u(t))\|_{L^{q_2}(I;W^{1,r_2})}\\ 
	 &\lesssim \|u_0\|_{H^1(\R^3)} + T^{\theta_1}\|\nabla u\|^2_{L^{q_1}(I;L^{r_1})} \|u\|_{L^{q_1}(I;W^{1,r_1})}\\
	 &\qquad\qquad\qquad+T^{\theta_2}\|\nabla u\|^3_{L^{q_2}(I;L^{r_2})} \|u\|_{L^{q_2}(I;W^{1,r_2})},
\end{align*}
where $\theta_1=\frac{1}{2}$, $\theta_2=\frac{1}{4}$, $(q_1,r_1)$ and $(q_2,r_2)$ as in \eqref{eq:stri1} and \eqref{eq:stri2}, respectively.
Then, for $u\in B_M$, we have 
\begin{equation}
\begin{aligned}\label{Lwp5}
	& \|\Phi(u(t))\|_{L^{\infty}(I;H^1)} + \|\Phi(u(t))\|_{L^{q_1}(I;W^{1,r_1})} + \|\Phi(u(t))\|_{L^{q_2}(I;W^{1,r_2})} \\
	&\, \le C\|u_0\|_{H^1(\R^d)} + CT^{\theta_1}M^3 + CT^{\theta_2}M^4.
\end{aligned}
\end{equation}
Set $M=2C\|u_0\|_{H^1(\R^d)}$ and take $T>0$ such that
\begin{align}\label{Lwp6}
	CT^{\theta_1}M^{2} + CT^{\theta_2}M^{3} \le \frac{1}{2},
\end{align}
yielding that \eqref{Lwp5} is bounded by $M$. Therefore, for some time $T=T(\|u_0\|_{H^1})>0$, we obtain $\Phi:\,B_M\rightarrow B_M.$ 

To complete the proof we need to show that the operator $\Phi$ is a contraction. This is achieved by running the same argument as above on the difference 
\[
d(\Phi(u(t)),\Phi(v(t))):=\|\Phi(u(t))-\Phi(v(t))\|_{L^{q_1}(I;L^{r_1})} + \|\Phi(u(t))-\Phi(v(t))\|_{L^{q_2}(I;L^{r_2})}
\]
for $u,v\in B_M$. Applying Strichartz estimates, we get
\[
d(\Phi(u(t)),\Phi(v(t))) \lesssim \||u|^2 u - |v|^2 v\|_{L^{q_1^{\prime}}(I;L^{r_1^{\prime}})} + \||u|^3 u - |v|^3 v\|_{L^{q_2^{\prime}}(I;L^{r_1^{\prime}})},
\]
where in $d=2$
\begin{equation}\label{eq:contract1}\begin{aligned}
\||u|^2 u &- |v|^2 v\|_{L^{q_1^{\prime}}(I;L^{r_1^{\prime}})}\\
&\lesssim T^{\theta_1} \Big(\|\nabla u\|^2_{L^{\infty}(I;L^{2})} + \|\nabla v\|^2_{L^{\infty}(I;L^{2})}\Big) \|u-v\|_{L^{q_1}(I;L^{r_1})},
\end{aligned}
\end{equation}
and
\begin{equation}\label{eq:contract2}
\begin{aligned}
\||u|^3 u &- |v|^3 v\|_{L^{q_2^{\prime}}(I;L^{r_1^{\prime}})} \\
&\lesssim T^{\theta_2}
\Big(\|\nabla u\|^3_{L^{\infty}(I;L^{2})} + \|\nabla v\|^3_{L^{\infty}(I;L^{2})}\Big)\|u-v\|_{L^{q_2}(I;L^{r_2})}.
\end{aligned}
\end{equation}
A similar computation in 3D for \eqref{eq:contract1} and \eqref{eq:contract2} yields the pairs $(q_1,r_1)$ and $(q_2,r_2)$, respectively, in place of $(\infty,2)$. 
Thus, for $u,v\in B_M$, we have
\[
d(\Phi(u(t)),\Phi(v(t))) \lesssim \big(T^{\theta_1}M^{2} + T^{\theta_2}M^{3}\big)d(u,v),
\]
which with the smallness condition \eqref{Lwp6} implies that $\Phi$ is a contraction on $B_M$. The continuous dependence follows by a similar argument.
\end{proof}

The space $H^1(\R^d)$ is  the physical energy space associated to \eqref{eq:nls}. In particular, the mass, energy, and momentum are well defined for $u\in H^1(\R^d)$. 
The angular momentum $L(u)$, however, is {\it not} necessarily well defined for $u\in H^1(\R^d)$. 
To overcome this issue, a natural approach is to introduce the conformal space 
\[
\Sigma:=\left\{f\in H^1(\R^d);\ |x |f \in
L^2(\R^d)\right\}, 
\]
equipped with the norm
\[
\|f\|^2_\Sigma =
\|f\|^2_{L^2(\R^d)}+ \left\| \nabla f\right\|^2_{L^2(\R^d)}+\left\| x f\right\|^2_{L^2(\R^d)}.
\]
By Cauchy-Schwarz and Young's inequality, we see that for $u\in \Sigma$:
\begin{equation}\label{eq:Lest}
|L(u)| = |  \langle u, L_z u \rangle_{L^2} | \le \| \nabla u \|_{L^2(\R^d)} \, \| x u \|_{L^2(\R^d)} \le \| u\|^2_{\Sigma}<\infty.
\end{equation}

\begin{remark}
It is not clear if $\Sigma$ is the largest possible space in which one can make sense of the vorticity $L(u)$. Since the action of $L(u)$ shares a close similarity to the 
one of a magnetic field, it is conceivable that magnetic Sobolev-type spaces can also be used (see also Section \ref{sec:inverse}).
\end{remark}

The first main result of this paper is then as follows:

\begin{theorem}[Global well-posedness]\label{thm:gwp}
Let $d=2,3$.
\begin{enumerate}
\item For any $u_0\in H^1(\R^d)$, there exists a unique global in-time solution 
$u\in C(\R; H^1(\R^d))$
to \eqref{eq:nls}, depending continuously on the initial data $u_0$. 
Furthermore, $u$ obeys the conservation of mass $M(u)$, energy $E(u)$ and momentum $P(u)$, defined in \eqref{eq:energy} and \eqref{eq:mass}, respectively.

\item If in addition $u_0\in \Sigma$, then $u\in C(\R; \Sigma)$ and, in addition, $u$ conserves the angular momentum $L(u)$.
\end{enumerate}
\end{theorem}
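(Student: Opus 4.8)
The plan is to build on Proposition \ref{prop:LWP}: for part (1), establish the conservation laws and then extract from the conserved energy and mass an a priori $H^1$ bound, which via the blow-up alternative rules out finite-time blow-up; for part (2), propagate the $\Sigma$-regularity by means of the pseudo-conformal vector field and then verify that the angular momentum is conserved.

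\emph{Step 1 (conservation laws and global $H^1$ solution).} The formal identities for $M(u)$, $E(u)$, $P(u)$ are obtained by pairing \eqref{eq:nls} with $\bar u$, with $\overline{\d_t u}$, and with $\overline{\nabla u}$ and taking real/imaginary parts, but these manipulations are not licit at $H^1$-regularity; I would justify them by the standard regularisation-of-the-nonlinearity argument (alternatively, prove them for data in $H^1\cap H^2$, where persistence of regularity makes the solution regular enough in time, and pass to the limit using the continuous dependence in Proposition \ref{prop:LWP}). Granting conservation of $M$ and $E$, combine \eqref{eq:L4normHolder} with Young's inequality, $\|u\|_{L^2}^{2/3}\|u\|_{L^5}^{10/3}\le \eta\|u\|_{L^5}^5+C_\eta\|u\|_{L^2}^2$ (conjugate exponents $3$ and $3/2$), and pick $\eta$ with $\tfrac12\eta<\tfrac25$; this absorbs the focusing quartic term into the defocusing quintic term plus the mass, giving
\[
\tfrac12\|\nabla u(t)\|_{L^2}^2 \le E(u(t))+C\,M(u(t)) = E(u_0)+C\,M(u_0)
\]
on the maximal interval. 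With conservation of mass this yields $\sup_t\|u(t)\|_{H^1}<\infty$, so the blow-up alternative of Proposition \ref{prop:LWP} forces the solution to be global; this proves (1).

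\emph{Step 2 (propagation of $\Sigma$-regularity).} Introduce $J(t):=x+it\nabla$, which commutes with $i\d_t+\tfrac12\Delta$, so that $J(t)u(t)=e^{it\Delta/2}(xu_0)-i\int_0^t e^{i(t-s)\Delta/2}J(s)f(u(s))\,ds$. Since $f$ is gauge invariant one has the pointwise bound $|J(t)f(u)|\lesssim(|u|^2+|u|^3)\,|J(t)u|$, so the Strichartz/fixed-point scheme of Proposition \ref{prop:LWP}, run now in the norm obtained by adjoining the $J(t)$-weighted Strichartz quantities for the admissible pairs \eqref{eq:stri1}, \eqref{eq:stri2} (equivalently, after conjugation by the pseudo-conformal phase $e^{i|x|^2/(2t)}$, which turns $J(t)$ into a plain gradient), gives local well-posedness in $\Sigma$ with a time step depending only on $\|u_0\|_{H^1}$, not on $\|xu_0\|_{L^2}$. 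Since $\|u(t)\|_{H^1}$ is globally bounded by Step 1, this time step is uniform, so $\|J(t)u(t)\|_{L^2}$ never blows up, and from $\|xu(t)\|_{L^2}\le\|J(t)u(t)\|_{L^2}+|t|\,\|\nabla u(t)\|_{L^2}$ we conclude $u\in C(\R;\Sigma)$.

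\emph{Step 3 (conservation of angular momentum).} For $u\in C(\R;\Sigma)$ the quantity $L(u(t))$ is finite and continuous in $t$ by \eqref{eq:Lest}. Writing $L_z=iM$ with $M:=x_1\d_{x_2}-x_2\d_{x_1}$, using self-adjointness of $L_z$ and \eqref{eq:nls} one gets $\tfrac{d}{dt}L(u)=2\RE\langle L_zu,\d_t u\rangle=-2\IM\langle L_zu,\ \tfrac12\Delta u+|u|^2u-|u|^3u\rangle$; the Laplacian term contributes a real number because $M$ commutes with $\Delta$, and for the nonlinear terms $\RE(\overline{Mu}\,u)=\tfrac12 M(|u|^2)$, so $\IM\langle L_zu,g(|u|^2)u\rangle=-\tfrac12\int_{\R^d}M\big(G(|u|^2)\big)\,dx=0$ (with $G'=g$), since $M$ is divergence free. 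Hence $\tfrac{d}{dt}L(u)\equiv0$; this is rigorous for data in $\Sigma\cap H^2$ and extends to $u_0\in\Sigma$ by approximation and continuity of $L$ on $\Sigma$. The main obstacle is Step 2: unlike $M$, $E$, $P$, the weight $\|xu\|_{L^2}$ corresponds to no conserved quantity, so its global control must be bootstrapped from the $H^1$ bound through the commutator structure of $J(t)$ and the gauge invariance of $f$, and setting up the Strichartz estimates for $J(t)u$ with the non-standard admissible pairs of \eqref{eq:stri1}–\eqref{eq:stri2} is where the real work lies; the recurring low-regularity justification of the conservation identities in Steps 1 and 3 is a secondary technical point handled by the usual approximation procedure.
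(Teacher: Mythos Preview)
Your proof is correct, and for part~(1) it is essentially the paper's argument: both control the focusing quartic term by the defocusing quintic term plus the mass via Young's inequality (you go through the H\"older bound \eqref{eq:L4normHolder} first, whereas the paper applies the pointwise Peter--Paul inequality $|u|^4\le\varepsilon|u|^5+C(\varepsilon)|u|^2$ directly), and then invoke the blow-up alternative.

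The genuine difference is in part~(2). You propagate the weight via the pseudo-conformal operator $J(t)=x+it\nabla$, exploiting its commutation with the free flow and the gauge invariance of $f$ to run a Strichartz estimate for $J(t)u$ whose absorption constant depends only on the global $H^1$ bound already obtained. The paper instead uses the elementary virial identity
\[
\frac{d}{dt}\|xu(t)\|_{L^2}^2 = 2\,\IM\!\int_{\R^d} x\,\bar u\,\nabla u\,dx \le \|xu(t)\|_{L^2}^2 + \|\nabla u(t)\|_{L^2}^2,
\]
and closes with Gronwall, yielding $\|xu(t)\|_{L^2}\lesssim e^{|t|}$. The paper's route is shorter and avoids re-running any Strichartz machinery; your route is the standard one from scattering theory and, once set up, gives finer control (at most polynomial growth of $\|xu(t)\|_{L^2}$ through $\|J(t)u\|_{L^2}$ plus $|t|\,\|\nabla u\|_{L^2}$) and would be the natural starting point if one later wanted asymptotic information. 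For the conservation of $L(u)$ the paper simply records $\tfrac{d}{dt}L(u)=0$ by direct computation (with a reference), while you spell out the commutator and divergence-form argument; both are the same computation.
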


\begin{proof} To prove item (1) of the  theorem, we note that the Peter-Paul inequality gives for any $\varepsilon > 0$
\[
|u|^4 \le \varepsilon |u|^5 + C(\varepsilon)|u|^2.
\] 
Multiplying the inequality by $-1/2$ and choosing $\varepsilon = 4/5$ yields 
 \begin{equation}\label{eq:youngs}
 -\frac{1}{2}|u|^4 + \frac{2\kappa}{5}|u|^5 \ge -\frac{C(\varepsilon)}{2}|u|^2.
 \end{equation}
 Combining this together with the conservation of energy, we have
 \begin{align*}
    E(u_0):&=  \frac{1}{2}\|\nabla u(t, \cdot)\|_{L^2(\R^d)}^2 -\frac{1}{2}\|u\|_{L^4(\R^d)}^4 + \frac{2}{5}
    \|u\|_{L^5(\R^d)}^5\\
    &\ge \frac{1}{2}\|\nabla u(t, \cdot)\|_{L^2(\R^d)}^2 -\frac{C(\varepsilon)}{2}M(u_0).
\end{align*}
This consequently yields a uniform in-time 
bound on $\|u(t, \cdot)\|_{H^1(\R^d)}$ and thus, the blow-up alternative implies that $T=\infty$.  

The proof of the conservation laws for mass, energy and momentum follows along the same lines as in \cite[Theorem~III]{Kato1987} 
(see also \cite{Oz06} for an alternative approach which does not require any additional smoothness of the solution $u$).

To prove item (2),
we consider the first derivative of the variance, which yields 
\[
\frac{d}{dt} \|x u(t, \cdot)\|_{L^2(\R^d)}^2 = 2 \IM \int_{\R^d} x \bar{u} (t, \cdot) \nabla u(t, \cdot) \;dx \le \|x u(t, \cdot)\|_{L^2(\R^d)}^2 + \|\nabla u (t, \cdot)\|_{L^2(\R^d)}^2.
\]
Here, we have used the Cauchy-Schwarz inequality together with Young's inequality. Thus, the uniform in-time boundedness of $\|\nabla u(t)\|_{L^2(\R^d)}$ from item (1) yields 
\[
\frac{d}{dt} \|x u(t, \cdot)\|_{L^2(\R^d)}^2 \le C(\varepsilon, M, E) + \|x u(t, \cdot)\|_{L^2(\R^d)}^2.
\]
Now invoking the general form of Gronwall's inequality implies that 
\[
\|x u(t, \cdot)\|_{L^2(\R^d)} \lesssim e^t.
\]
This shows that for $u_0\in \Sigma$, the solution $u(t)\in \Sigma$, for all $t\ge 0$, and the conservation law for $L(u)$ follows by the fact that 
\[
\frac{d}{dt}L(u)(t)= \frac{d}{dt}\langle u(t, \cdot), L_z u(t, \cdot) \rangle_{L^2} = 0,
\]
which can be verified by a direct computation, see, e.g.\cite{AMS}.
\end{proof}

It is clear from the proof that global well-posedness requires the inclusion of the defocusing LHY-correction. Without it, solutions to the focusing 
cubic NLS in $d=2,3$ will in general exhibit finite-time blow-up, cf. \cite{CazCourant}.


\section{Central vortex states}\label{sec:m-vortexstates} 

The partially attractive nature of the cubic-quartic nonlinearity allows for the possibility of self-bound steady 
states, i.e., solitary wave solutions to \eqref{eq:nls} of the form 
\[
u(t,x) = e^{i\omega t}\phi(x).
\]
A possible approach to imprint vorticity, is to assume that the stationary profile $\phi$ is given by a multiple of an
eigenfunction to the angular momentum operator $L_z=i \partial_\theta$, see, e.g., \cite{BBBS, BV2003, CuMa2016}. 
This corresponds to making an ansatz of the form 
 \begin{equation}
 \label{eq:vortexstates}
u(t, x) = e^{i\omega t}\phi_m(x)=\begin{cases}
\,\,\,e^{i(\omega t + m\theta)} \psi(r);\quad &d=2, \\
e^{i(\omega t + m\theta)} \psi(r,z);\quad &d=3,
\end{cases}
\end{equation}
where $m\in \Z$, $\omega\in\R$ and $r=\sqrt{x_1^2 +x_2^2}$. Note that since $L_z\phi_m = m \phi_m$, we directly obtain for the 
solution to \eqref{eq:nls}:
\[
L(u(t))=\langle u(t, \cdot), L_z u(t, \cdot)\rangle_{L^2} = m\in \Z, \ \text{for all $t\in \R$,}
\]
provided $\psi \in L^2(\R^d)$.
The class of functions $\{ \phi_m \,; \, m\in \Z\}\subset L^2(\R^d)$ is usually referred to as {\it central vortex states}. Their (in-)stability properties in 
the case with LHY correction have very recently been studied numerically, see \cite{LXCYML}.

\begin{remark}
The ansatz \eqref{eq:vortexstates} is not the only possibility to ensure non-zero vorticity of steady states. In the presence of 
an additional confining potential $V=V(x)$ of harmonic oscillator type, one can instead try to obtain ground state solutions $\phi\in \Sigma$ 
which minimize the associated Gross-Pitaevskii functional
\[
E_{\Omega}(\phi) = E(\phi) + \| V \phi \|_{L^2(\R^d)}^2 + \Omega L(\phi), \quad \Omega \in \R,
\]
subject to a mass constraint (cf. the approach in Section \ref{sec:minimizer2d} below). 
For rotation speeds $\Omega$ within a certain sub-critical range this approach has been successfully used in, e.g., \cite{ANS, RS1}.
It has the drawback, however, that it requires an additional confinement $V$ in order to ensure coercivity of the energy functional $E_{\Omega}$ on the $\Sigma$-space, and thus 
the associated ground states $\phi$ can no longer be considered as ``self-bound". Moreover, it is not clear which value of $L(\phi)\in \R$  
will be achieved by the minimizer for any given choice of $\Omega$. In particular, non-existence of vortex solutions in this context was shown in \cite{GLY}.

\end{remark}

With the ansatz \eqref{eq:vortexstates}, the vortex profile $\phi_m(x)$ solves 
\begin{equation}
 \label{eq:soliton-m}
 - \frac{1}{2}\Delta\phi_m      
 - |\phi_m|^2\phi_m + |\phi_m|^3\phi_m + \omega\phi_m=0,\quad x\in\R^d,
\end{equation}
where $d=2,3$. The associated cylindrically symmetric amplitude $\psi=\psi(r,z)$ satisfies the following equation
\begin{equation}
 \label{eq:vortices}
 - \frac{1}{2}\Delta\psi      
 - |\psi|^2\psi + |\psi|^3\psi +\left(\omega+\frac{m^2}{2r^2}\right)\psi=0.
\end{equation}
In 2D, we naturally think of $\psi=\psi(r)$ as a radially symmetric function. 
Observe that if $m=0$, equation \eqref{eq:vortices} reduces to the zero vorticity case, i.e., equation \eqref{eq:soliton-m} for $\phi_0$. Also note that 
if $\psi$ solves \eqref{eq:vortices}, then so does $e^{i \theta} \psi(\cdot + z_0)$, for $\theta \in \R$, $z_0\in \R$, due to 
gauge invariance and translation invariance w.r.t. the $z$-axis.

The energy \eqref{eq:energy} associated to solutions of the form \eqref{eq:vortexstates} becomes
\begin{equation}
\label{eq:VortexEnergy}
 E_m(\psi) = \frac{1}{2}\int_{\R^d}|\nabla \psi|^2\,dx + \frac{m^2}{2}\int_{\R^d}\frac{|\psi|^2}{r^2}\,dx-\frac{1}{2}\int_{\R^d}|\psi|^4\,dx + \frac{2}{5} \int_{\R^d}|\psi|^5\,dx.
\end{equation}
The first two terms can be re-interpreted as the kinetic and potential energies of a quantum particle under the action of a repulsive inverse-square potential $V(x) = \frac{m^2}{2r^2}$, a 
fact which we shall explore in more detail in Section \ref{sec:inverse} below.

In the following, we denote by $H^1_m(\R^d)$ the space of cylindrically symmetric functions with 
finite {\it vortex-energy} $E_m$, i.e.,
\begin{equation}
\label{eq:spaceH1m}
H^1_m(\R^d):=\left\{ u\in H^1(\R^d)\ : \ r^{-1}u \in L^2(\R^d)\ \text{and}\ u(x)=u(r,z)\right\},
\end{equation}
with the understanding that in $d=2$, we have $r=|x|$ and $u= u(|x|)$ radially symmetric. 
In particular, 
\[
H^1_m(\R^2) \equiv H^1_{m, {\rm rad}}(\R^2).
\]
The norm associated to the space defined in \eqref{eq:spaceH1m} is
\begin{equation}
\label{eq:normH1m}
\|u\|^2_{H^1_m(\R^d)}:=\|\nabla u\|^2_{L^2(\R^d)} + m^2 \|r^{-1}u\|^2_{L^2(\R^d)} + \|u\|^2_{L^2(\R^d)}.
\end{equation}
To make the notation simpler, we also introduce
\begin{equation}
\label{eq:spaceHdot1m}
\dot{H}^1_m(\R^d):=\left\{ \nabla u\in L^2(\R^d)\ : \ r^{-1}u\in L^2(\R^d)\right\},
\end{equation}
equipped with the norm
\begin{equation}
\label{eq:normdotH1m}
\|u\|^2_{\dot{H}^1_m(\R^d)}:=\|\nabla u\|^2_{L^2(\R^d)} + m^2 \|r^{-1}u\|^2_{L^2(\R^d)} .
\end{equation}

The following lemma collects some necessary conditions for the existence of central vortex states. 

\begin{lemma}[Preliminary estimates]
\label{lem:pre-est} Suppose $d=2,3$. 
\begin{enumerate}
\item Any solution $\psi \in H^1_m(\R^d)$ to \eqref{eq:vortices} satisfies the Pohozaev identities 
\begin{equation}
  \label{eq:psi1}
  \begin{aligned}
  \frac{1}{2}\|\psi\|^2_{\dot{H}^1_m(\R^d)} - \|\psi\|^4_{L^4(\R^d)}   + \|\psi\|^5_{L^5(\R^d)} +\omega \|\psi\|^2_{L^2(\R^d)}= 0,
  \end{aligned}
\end{equation}
\begin{equation}
	\label{eq:psi2a}
	\begin{aligned}
	\frac{d-2}{4d}\|\psi\|^2_{\dot{H}^1_m(\R^d)} - \frac{1}{4}\|\psi\|^4_{L^4(\R^d)}  + \frac{1}{5}\|\psi\|^5_{L^5(\R^d)} +\frac{\omega }{2} \|\psi\|^2_{L^2(\R^d)}= 0.
  \end{aligned}
\end{equation}
\item A necessary condition on the frequency $\omega \in \R$ to have a nontrivial solution $\psi\not \equiv 0$ is
\begin{equation*}
  0<\omega<\frac{25}{216}.
\end{equation*}
\item\label{lem:pre-est3} For a nontrivial solution $\psi$ to equation \eqref{eq:vortices} in 2D we have
\[
\|\psi\|_{L^2(\R^2)}^2 > \|Q_m\|_{L^2(\R^2)}^2,
\]
where $Q_m=Q_m(r)>0$ is the unique ground state solution to 
\begin{equation}\label{eq:soliton-cubic}
- \frac{1}{2}\Delta Q_m +\left(1+\frac{m^2}{2|x|^2}\right)Q_m      
 - Q_m^3 =0.
\end{equation}
 \end{enumerate}
\end{lemma}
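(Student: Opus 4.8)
The plan is to dispatch the three items in order, since (2) and (3) both build on the Pohozaev identities of (1).

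For item (1), I would obtain the first identity \eqref{eq:psi1} by multiplying \eqref{eq:vortices} by $\bar\psi$, integrating over $\R^d$ and taking real parts: the Laplacian term integrates by parts to $\tfrac12\|\nabla\psi\|_{L^2}^2$, the inverse-square term combines with it into $\tfrac12\|\psi\|^2_{\dot{H}^1_m}$, and the nonlinear terms give $-\|\psi\|_{L^4}^4+\|\psi\|_{L^5}^5$. The second identity \eqref{eq:psi2a} is the dilation (Pohozaev) identity: it is cleanest to differentiate the action $S(\psi)=E_m(\psi)+\omega\|\psi\|_{L^2}^2$ — of which \eqref{eq:vortices} is the Euler--Lagrange equation — along the scaling family $\psi_\lambda(x)=\psi(\lambda x)$ at $\lambda=1$, which vanishes because $S'(\psi)=0$ (equivalently, one tests \eqref{eq:vortices} against $x\cdot\nabla\bar\psi$). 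The relevant homogeneities are $\|\psi_\lambda\|^2_{\dot{H}^1_m}=\lambda^{2-d}\|\psi\|^2_{\dot{H}^1_m}$ (the term $m^2\int r^{-2}|\psi|^2$ has exactly the same scaling as $\int|\nabla\psi|^2$) and $\|\psi_\lambda\|_{L^p}^p=\lambda^{-d}\|\psi\|_{L^p}^p$; collecting coefficients and dividing by $-2d$ yields \eqref{eq:psi2a}. (In $d=2$ the first term drops out, consistent with the scale-invariance of $\|\cdot\|_{\dot{H}^1_m}$ there.) The one delicate point is justifying that all the multiplier manipulations are legitimate, i.e.\ that $\psi$ has enough regularity and decay for the boundary contributions at infinity and at the singular axis $r=0$ to vanish; this follows from elliptic regularity applied to \eqref{eq:vortices} together with the membership $r^{-1}\psi\in L^2(\R^d)$ built into $H^1_m(\R^d)$.

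For item (2), abbreviate $A=\|\psi\|^2_{\dot{H}^1_m}$, $B=\|\psi\|_{L^4}^4$, $C=\|\psi\|_{L^5}^5$, $D=\|\psi\|_{L^2}^2$, all strictly positive for $\psi\not\equiv 0$. Viewing \eqref{eq:psi1}--\eqref{eq:psi2a} as a linear system, I would solve for $C$ and $\omega D$ in terms of $A$ and $B$: concretely, $C=\tfrac56(B-A)$, $\omega D=\tfrac16(B+2A)$ in $d=2$, and $C=\tfrac5{18}(3B-2A)$, $\omega D=\tfrac1{18}(A+3B)$ in $d=3$. Positivity of $\omega D$ immediately gives $\omega>0$. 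For the upper bound I insert these into H\"older's inequality \eqref{eq:L4normHolder} written as $B^3\le D\,C^2$; eliminating $D=\omega^{-1}(\omega D)$ and setting $a=A/B$ — which, by positivity of $C$, ranges over $(0,1)$ if $d=2$ and over $(0,\tfrac32)$ if $d=3$ — this reduces to an inequality $\omega\le\tfrac{25}{216}\,g_d(a)$ with $g_d$ an explicit elementary function (for instance $g_2(a)=(1+2a)(1-a)^2$) that is strictly decreasing on the admissible interval with $g_d(0^+)=1$. Hence $\omega\le\tfrac{25}{216}$, and the inequality is strict because $a>0$ (equivalently, because equality in \eqref{eq:L4normHolder} cannot hold for a nontrivial $H^1$-function). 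This handles $d=2$ and $d=3$ simultaneously.

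For item (3), in the case $d=2$, the key input is the sharp Gagliardo--Nirenberg inequality with inverse-square weight,
\[
\|u\|_{L^4(\R^2)}^4\ \le\ \frac{1}{\|Q_m\|_{L^2(\R^2)}^2}\,\|u\|^2_{\dot{H}^1_m(\R^2)}\,\|u\|_{L^2(\R^2)}^2,\qquad u\in H^1_m(\R^2),
\]
whose extremiser is the ground state $Q_m$ of \eqref{eq:soliton-cubic}. This I would establish the usual way: the Weinstein quotient $\|u\|^2_{\dot{H}^1_m}\|u\|_{L^2}^2/\|u\|_{L^4}^4$ attains its infimum (compactness holds in the radial/$m$-symmetric class, which rules out escape of mass to infinity), the minimiser solves \eqref{eq:soliton-cubic} after rescaling, and its own Pohozaev identities give $\|Q_m\|^2_{\dot{H}^1_m}=\|Q_m\|_{L^4}^4=2\|Q_m\|_{L^2}^2$, which pins the constant to $1/\|Q_m\|_{L^2}^2$. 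From the two Pohozaev identities of item (1) in $d=2$ one has $\|\psi\|^2_{\dot{H}^1_m}=2\omega\|\psi\|_{L^2}^2-\tfrac25\|\psi\|_{L^5}^5$ and $\|\psi\|_{L^4}^4=2\omega\|\psi\|_{L^2}^2+\tfrac45\|\psi\|_{L^5}^5$. Substituting both into the Gagliardo--Nirenberg inequality and rearranging, all terms collect into
\[
\tfrac25\,\|\psi\|_{L^5}^5\Big(2\|Q_m\|_{L^2}^2+\|\psi\|_{L^2}^2\Big)\ \le\ 2\omega\,\|\psi\|_{L^2}^2\Big(\|\psi\|_{L^2}^2-\|Q_m\|_{L^2}^2\Big);
\]
since the left-hand side is strictly positive and $\omega>0$ by item (2), the right-hand side must be positive too, forcing $\|\psi\|_{L^2}^2>\|Q_m\|_{L^2}^2$.

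I expect the main obstacle to be in item (3): constructing the extremal function for the weighted Gagliardo--Nirenberg inequality and identifying it with the unique positive ground state $Q_m$ of \eqref{eq:soliton-cubic}. Ordinary Schwarz symmetrisation is unhelpful here because it worsens the term $\int r^{-2}|u|^2$, so the compactness of minimising sequences needs a concentration-compactness argument carried out within the $m$-symmetric class (or one may import the inequality and the uniqueness of $Q_m$ from the literature on NLS with inverse-square potentials). By contrast, item (1) is a routine multiplier computation modulo the regularity remarks above, and item (2) is elementary algebra followed by a one-variable optimisation.
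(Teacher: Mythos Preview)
Your proposal is correct. Item~(1) matches the paper's argument (testing against $\bar\psi$ and against $x\cdot\nabla\bar\psi$, which is equivalent to your scaling-of-the-action computation). In items~(2) and~(3), however, your route differs from the paper's.

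For~(2), the paper does not use H\"older at all: it simply observes that the function $s\mapsto \tfrac{\omega}{2}s^2-\tfrac14 s^4+\tfrac15 s^5$ is pointwise nonnegative on $[0,\infty)$ precisely when $\omega\ge\tfrac{25}{216}$, so for such $\omega$ the identity~\eqref{eq:psi2a} (whose gradient term has a nonnegative coefficient $(d-2)/4d$) forces $\psi\equiv0$. Your approach---solving the linear system for $C$ and $\omega D$, then feeding H\"older $B^3\le DC^2$ and optimising the resulting one-variable function $g_d(a)$---is more computational but has the virtue of yielding the \emph{strict} bound $\omega<\tfrac{25}{216}$ directly from $a>0$; the paper's pointwise argument as written only gives $\omega\le\tfrac{25}{216}$ and would need an extra line to rule out equality.

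For~(3), both arguments rest on the same sharp weighted Gagliardo--Nirenberg inequality (which the paper imports from the literature rather than reproving). The paper's use of it is a little cleaner than yours: from the two Pohozaev identities one reads off $\|\psi\|_{L^5}^5=\tfrac56\big(\|\psi\|_{L^4}^4-\|\psi\|^2_{\dot H^1_m}\big)$, so positivity of the left side gives $\|\psi\|_{L^4}^4>\|\psi\|^2_{\dot H^1_m}$, and one application of the sharp inequality finishes. Your full substitution reaches the same conclusion with a bit more algebra.
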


\begin{proof}
For (1), we first assume that $\psi$ is sufficiently smooth and rapidly decaying as $r\to \infty$. Then we directly obtain \eqref{eq:psi1} by multiplying \eqref{eq:vortices} with $\bar \psi$ and integrating w.r.t. $x\in \R^d$. 
To obtain \eqref{eq:psi2a}, we instead multiply by \eqref{eq:vortices} with $x\cdot \nabla \bar \psi$ and integrate by parts. Now by taking $(d/5)\times$\eqref{eq:psi1}$-$\eqref{eq:psi2a}, we get
 \begin{equation}
 \label{eq:psi2}
 \frac{10-3d}{2}\|\psi\|^2_{\dot{H}^1_m(\R^d)} + \frac{d}{2}\int_{\R^d}|\psi|^4dx = 3\omega d\int_{\R^d}|\psi|^2dx,
 \end{equation}
for sufficiently ``nice" $\psi$, 
and a limiting argument allows us to extend this result to general $\psi$ (for details see \cite{BL83a}). 

(2) We observe that \eqref{eq:psi2} implies that $\omega >0$ (unless $\psi\equiv 0$) is necessary for nontrivial $\psi$. Now we find the upper bound for $\omega$, for which we introduce the function
\[
F(s) = \frac{1}{4}s^4 - \frac{1}{5}s^5,
\] 
and set 
\[
\omega^* = \sup\Big\{\omega > 0;\quad \frac{\omega}{2}s^2 - F(s) < 0\,\,\text{for some}\,\,s>0 \Big\}.
\]
A (not so direct) computation then shows that $\omega^*=\frac{25}{216}$. In particular, if $\omega > \frac{25}{216}$, we have
\[
-\frac{1}{4}|\psi(x)|^4 + \frac{1}{5}|\psi(x)|^5 + \frac{\omega}{2}|\psi(x)|^2\ge 0,\quad \forall x\in\R^d. 
\]
\eqref{eq:psi2a} then implies that $\psi\equiv 0$. 

(3) We fix $d=2$ and introduce
\[
\gamma = \frac{\|\psi\|^4_{L^4(\R^2)}}{\|\psi\|^2_{\dot{H}^1_m(\R^2)}}.
\]
We then rewrite \eqref{eq:psi1} as
\[
\left(\frac{1}{2}-\gamma\right)\|\psi\|^2_{\dot{H}^1_m(\R^2)} + \|\psi\|^5_{L^5(\R^2)} + \omega \|\psi\|^2_{L^2(\R^2)} = 0.
\]
Similarly, we also rewrite \eqref{eq:psi2a} using $\gamma$
\[
-\frac{\gamma}{2}\|\psi\|^2_{\dot{H}^1_m(\R^2)} + \frac{2}{5}\|\psi\|^5_{L^5(\R^2)} + \omega\|\psi\|^2_{L^2(\R^2)} =0.
\]
Combining the above expressions, we get 
\[
\|\psi\|^5_{L^5(\R^2)} = \frac{5(\gamma-1)}{6}\|\psi\|^2_{\dot{H}^1_m(\R^2)}.
\]
which implies that for a solution $\psi\not =0$ to equation \eqref{eq:vortices}, we have $\gamma >1$, i.e., $\|\psi\|^4_{L^4(\R^2)} > \|\psi\|^2_{\dot{H}^1_m(\R^2)}$. We now invoke the 
following Gagliardo-Nirenberg-type inequality, see \cite{Zheng}:
\begin{equation}\label{eq:GNip}
\|f\|^4_{L^4(\R^2)}\leq C_m\|f\|_{\dot{H}^1_m(\R^2)}^2\|f\|_{L^2(\R^2)}^2, 
\end{equation}
It is known that the optimal constant $C_m>0$ is given by $C_m= \|Q_m\|_{L^2(\R^2)}^{-2}$, where $Q_m$ is the radial cubic ground state solution satisfying \eqref{eq:soliton-cubic}. 
In view of \eqref{eq:GNip}, we obtain that $\psi$ satisfies $\|\psi\|_{L^2(\R^2)}^2 > \|Q_m\|_{L^2(\R^2)}^2,$ as desired. 
\end{proof}


\section{Existence of vortex states in $2$D}\label{sec:minimizer2d}

In this section, we shall rigorously prove the existence of a class of central vortex states, i.e., solutions $\psi\in H^1_m(\R^2)$ to \eqref{eq:vortices}. To this end, we  
search for minimizers of the associated vortex-energy $E_m(\psi)$ for a given (fixed) mass $\rho>0$. 
More precisely, we consider the following minimization problem
  \begin{equation}\label{eq:Emin2d} 
 e(\rho)=    \inf \{ E_m(\varphi)\ : \  \varphi\in H^1_m(\R^2),  \ M(\varphi) = \rho \}.
  \end{equation}
Assuming that $e(\rho)>-\infty$, we denote the associated set of (constrained) energy minimizers by
  \begin{equation}\label{eq:Eminset2d}
    \mathcal  E(\rho):=\{\psi \in H^1_m(\R^2),\  M(\psi)=\rho,\ E_m(\psi) = e(\rho)\},
  \end{equation}
 and call $\psi\in \mathcal E(\rho)$ a {\it vortex ground state}. 
Note that for any $\psi\in \mathcal E(\rho)$ there exists a Lagrange multiplier $\omega$, such that
\[
dE_m(\psi) + \omega  \, dM(\psi) =0.
\]    
Thus, $\psi\in \mathcal E(\rho)$  solves \eqref{eq:vortices} for some $\omega \in (0, \tfrac{25}{216})$. The following theorem will put this idea on firm grounds.

\begin{theorem}[Existence of vortex-ground states in 2D]
\label{thm:exist2d}
Let $\rho_* = \| Q_m\|_{L^2}^2$, where $Q_m$ is the ground state solution to \eqref{eq:soliton-cubic}. Then for all $\rho>\rho_*$ the set of energy minimizers  $\mathcal E(\rho)\not = \emptyset$. Moreover, for any $\rho>\rho_*$ there exists at least one 
minimizer $\psi\in \mathcal E(\rho)$ which is positive, radial and non-increasing in $|x|=r$.
\end{theorem}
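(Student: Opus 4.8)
The plan is to solve the constrained minimization problem \eqref{eq:Emin2d} by the concentration-compactness method, exploiting the fact that for $\rho>\rho_*$ the infimum $e(\rho)$ is finite and negative, which prevents vanishing and (via strict subadditivity) dichotomy. First I would establish coercivity and lower-semicontinuity ingredients: using the Gagliardo-Nirenberg inequality \eqref{eq:GNip} together with the Hölder bound \eqref{eq:L4normHolder} (or directly Peter-Paul on $\|\psi\|_{L^4}^4$ against $\|\psi\|_{L^5}^5$ and $\|\psi\|_{L^2}^2$ as in the proof of Theorem \ref{thm:gwp}), one sees
\[
E_m(\varphi)\ge \frac{1}{2}\|\varphi\|^2_{\dot H^1_m(\R^2)} - \frac{C_m}{2}\|\varphi\|^2_{\dot H^1_m(\R^2)}\,\frac{\rho}{\rho_*}\cdot\frac{\rho_*}{\rho}\ \cdots
\]
more carefully: writing $\|\varphi\|_{L^4}^4\le \rho_*^{-1}\|\varphi\|_{\dot H^1_m}^2\,\rho$ and absorbing, the quartic term is controlled by the (positive) quintic term plus a multiple of the mass, so $E_m$ is bounded below on the constraint and any minimizing sequence is bounded in $H^1_m(\R^2)$. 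Hence $e(\rho)>-\infty$ for all $\rho>0$; and by testing with a rescaled fixed profile $\varphi_\lambda(x)=\lambda\varphi(x)$ (mass scales like $\lambda^2$, so fix mass and instead use a spatial dilation $\varphi_\mu(x)=\mu^{d/2}\varphi(\mu x)$ which preserves $L^2$ and $\dot H^1_m$ scaling in 2D) one checks that for $\rho>\rho_*$ one can make $E_m<0$, because the quartic gain beats the quintic and gradient costs at the appropriate scale — this is exactly where the threshold $\rho_*=\|Q_m\|_{L^2}^2$ enters, through optimality of $C_m$ in \eqref{eq:GNip}.

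Next I would prove the strict subadditivity inequality $e(\rho) < e(\alpha) + e(\rho-\alpha)$ for all $0<\alpha<\rho$ (with the convention that $e$ is continuous and $e(0)=0$), which is the standard device to rule out dichotomy in concentration-compactness. The key structural input is homogeneity: because all terms in $E_m$ are superquadratic in an appropriate sense, one has the scaling relation $e(\theta\rho)<\theta\, e(\rho)$ for $\theta>1$ whenever $e(\rho)<0$, obtained by testing $e(\theta\rho)$ with $\sqrt{\theta}\,\psi$ for $\psi$ near-optimal for $e(\rho)$ and using that the quartic and quintic terms pick up factors $\theta^2$ and $\theta^{5/2}$ while the quadratic/gradient terms pick up only $\theta$; combined with $e(\rho)<0$ this yields strict subadditivity by the usual argument. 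One must be slightly careful that $e(\rho-\alpha)$ or $e(\alpha)$ could be $0$ when the smaller mass falls below $\rho_*$, but since $e(\rho)<0=e(\beta)$ for $\beta\le\rho_*$, the subadditivity inequality still holds in those borderline cases.

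With boundedness, non-vanishing (from $e(\rho)<0$ and the Gagliardo-Nirenberg inequality forcing a lower bound on $\|\psi_n\|_{L^4}$, hence on some translate's local mass), and strict subadditivity ruling out dichotomy, concentration-compactness gives — after translation in the $x$-variable, which preserves $H^1_m(\R^2)$ since we are in the radial class (in 2D, $H^1_m(\R^2)=H^1_{m,\mathrm{rad}}(\R^2)$ so translations are actually not available; instead I would work directly with the radial subspace where the embedding $H^1_{m,\mathrm{rad}}(\R^2)\hookrightarrow L^p$ is \emph{compact} for $2<p<\infty$, bypassing concentration-compactness entirely) — a subsequence converging strongly in $L^4$ and $L^5$, with weak $H^1_m$ limit $\psi$. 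The mass constraint is preserved in the limit by the compact embedding, weak lower semicontinuity of the $\dot H^1_m$-norm then forces $E_m(\psi)\le e(\rho)$, hence $\psi\in\mathcal E(\rho)$ and in fact strong $H^1_m$ convergence. Finally, to get a minimizer that is positive, radial and radially non-increasing, I would replace any minimizer $\psi$ by the Schwarz (radially decreasing) rearrangement $\psi^*$ of $|\psi|$: rearrangement preserves $M$ and all $L^p$ norms, does not increase $\|\nabla\psi\|_{L^2}$ (Pólya–Szegő), and does not increase $\int |\psi|^2/r^2$ since $1/r^2$ is itself radially decreasing (so by the Hardy–Littlewood inequality $\int \psi^{*2}/r^2 \ge \int|\psi|^2/r^2$ — wait, this goes the \emph{wrong} way). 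The correct point: $1/|x|^2$ is a radially \emph{decreasing} weight, so $\int |\psi|^2 (1/|x|^2)\,dx$ is increased by rearrangement, meaning the naive rearrangement does not obviously lower $E_m$; the resolution is that in the radial class one may instead argue directly, or use that the Euler–Lagrange equation \eqref{eq:vortices} with the positive radial minimizer and standard elliptic regularity plus the strong maximum principle upgrades any minimizer of $|\psi|$ to a positive one, and monotonicity follows from a moving-plane / rearrangement argument adapted to the inverse-square weight. I expect this last point — establishing radial monotonicity in the presence of the singular weight $m^2/2r^2$ — to be the main obstacle; the compactness part is comparatively routine given the compact radial embedding and $e(\rho)<0$.
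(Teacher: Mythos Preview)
Your final route—bypassing concentration-compactness via the compact embedding $H^1_{m,\mathrm{rad}}(\R^2)\hookrightarrow L^p(\R^2)$ for $2<p<\infty$—is exactly the paper's argument. Its Step~1 (showing $-\infty<e(\rho)<0$ for $\rho>\rho_*$ via the $L^2$-invariant dilation $\varphi_\lambda=\lambda\varphi(\lambda\,\cdot)$ and the sharp inequality \eqref{eq:GNip}) and Step~2 (bounded minimizing sequence, weak $H^1_m$ limit, strong $L^4$ and $L^5$ convergence from the radial compact embedding) match your sketch once you have self-corrected.

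Two divergences are worth flagging. First, the strict subadditivity you set up is not used in the paper's 2D proof at all (it appears only in the 3D argument of Proposition~\ref{prop:exist3d}); and your line ``the mass constraint is preserved in the limit by the compact embedding'' is not quite right, since compactness holds only for $p>2$. The paper instead upgrades to \emph{strong} $H^1_m$ convergence directly, writing $\|\varphi_n-\psi\|_{H^1_m}^2=\langle S(\varphi_n)-S(\psi),\varphi_n-\psi\rangle$ plus nonlinear cross-terms, with $S(\varphi)=-\Delta\varphi+m^2|x|^{-2}\varphi+\varphi-|\varphi|^2\varphi+|\varphi|^3\varphi$, and shows each piece tends to zero; strong $L^2$ convergence (hence mass preservation) then comes for free. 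Second, your hesitation about Schwarz rearrangement is well founded—Hardy--Littlewood indeed goes the wrong way for the weight $1/r^2$—but the paper does not resolve this either: it simply asserts that the radial minimizer is non-increasing and invokes $E_m(\varphi)=E_m(|\varphi|)$ (via \cite{LiLo}) for positivity, without justifying the monotonicity in $r$. So the obstacle you single out is real, and the paper offers no more on it than you do.
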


\begin{proof}
The proof that there exists a minimizer for $\rho > \rho_*$ follows by establishing the compactness of any minimizing sequence for \eqref{eq:Emin2d}. 
The fact that we are in the radial setting, which allows for compact embeddings, helps us to simplify the approach when compared to the 3D case: 

\smallskip
{\bf Step 1:} We start by showing that for all $\rho > \rho_*$: $-\infty<e(\rho)<0$, where $e(\rho)$ is defined in \eqref{eq:Emin2d} (cf. \cite[Theorem 6]{LewinRotaNodari2020} 
for the proof in the case $m = 0$). We invoke \eqref{eq:youngs} to write
\begin{equation}\label{eq:lowbdd}
E_m(\varphi)\ge \frac{1}{2}\|\varphi\|_{\dot{H}^1_m(\R^2)}^2 -C(\varepsilon,\rho),
\end{equation}
where $C(\varepsilon,\rho) = \frac{C(\varepsilon)}{2}\,\rho > 0$ and $\varepsilon=4/5$. Therefore, $e(\rho) > - C(\varepsilon,\rho) > -\infty$. Now for $\lambda>0$, we 
let $\varphi_{\lambda} = \lambda\varphi(\lambda x)$, the $L^2$-invariant scaling, and write 
\[
E_m(\varphi_{\lambda}) = \frac{\lambda^2}{2}\left(\|\varphi\|_{\dot{H}^1_m(\R^2)}^2 - \|\varphi\|^4_{L^4(\R^2)} + \frac{4\lambda}{5}\|\varphi\|^5_{L^5(\R^2)}\right).
\]
Invoking the sharp Gagliardo-Nirenberg inequality \eqref{eq:GNip} 
\[
\|\varphi\|_{L^4(\R^2)}^4 \leq \left(\frac{\|\varphi\|_{L^2(\R^2)}}{\|Q_m\|_{L^2(\R^2)}}\right)^2\|\varphi\|_{\dot{H}^1_m(\R^2)}^2,
\]
with the observation that $\|\varphi\|_{L^2(\R^2)}^2 > \|Q_m\|_{L^2(\R^2)}^2$ from Lemma \ref{lem:pre-est}(\ref{lem:pre-est3}), we may choose a profile 
$\varphi\in H^1_m(\R^2)$ such that the $\lambda$-independent terms inside the parenthesis become negative, e.g., take
\[
\varphi = \sqrt{\frac{\rho}{M\(Q_m\)}}\,Q_m, \ \text{for a sufficiently small}\  \lambda > 0.
\]
 Thus, one can find a $\rho_*$ such that $E_m(\varphi_\lambda)$ is negative for all $\rho > \rho_*$. We now use the value of the best constant $C_m$ (as that value will be unique) 
 in the Gagliardo-Nirenberg inequality \eqref{eq:GNip} to identify $\rho_*$ via the relation $C_m = \big(\rho_*\big)^{-1}$. 
 Thus, we denote by $\varphi_*$ any vortex state solution of \eqref{eq:vortices} (we do not know whether $\varphi_*$ is unique), and express $\rho_*$ via
 \[
M(\varphi_*) = \rho_* = \big(C_m\big)^{-1} = \|Q_m\|_{L^2(\R^2)}^2,
\] 
which is uniquely obtained from the best constant $C_m$ in \eqref{eq:GNip}. Observe that the function $Q_m$ is not a minimizer for \eqref{eq:Emin2d} because $Q_m$ does not solve the appropriate Euler-Lagrange equation \eqref{eq:vortices}.

\smallskip
{\bf Step 2:} In what follows, we consider a fixed mass $\rho>\rho_* = \|Q_m\|_{L^2(\R^2)}^2$, such that $-\infty < e(\rho) < 0$ and let $(\varphi_n)_{n\ge 1} \subset H^1_m(\R^2)$ be a minimizing sequence to \eqref{eq:Emin2d}, i.e.,
\[
\liminf\limits_{n \to \infty} E_m\(\varphi_n\) = e(\rho)\quad \text{and} \quad \|\varphi_n\|_{L^2}^2 = \rho, \ \forall n\geq 1.
\]
Therefore, there exists a positive constant $c$ such that
\[
E_m\(\varphi_n\) < e(\rho) + c,\ \forall n\geq 1.
\]
Inserting \eqref{eq:lowbdd} into the above estimate, we get
\[
\frac{1}{2}\|\varphi\|_{\dot{H}^1_m(\R^2)}^2 < e(\rho) + c + C(\varepsilon,\rho),
\]
i.e., $(\varphi_n)_{n\ge 1}$ is a bounded radial sequence in $H^1_m(\R^2)$. Therefore, for $\psi\in H^1_m(\R^2)$, there exists a subsequence, still denoted by $\varphi_n$, such that $\varphi_n$ converges weakly to $\psi$ in $H^1_m(\R^2)$. By the radial compact embedding $H^1_m(\R^2) \hookrightarrow L^p(\R^2)$ for $2<p<\infty$, see, e.g., \cite[Theorem 1(2)]{PL86}, $\varphi_{n}$ converges strongly to $\psi$ in $L^p(\R^2)$.

Denote 
\[
S(\varphi) = -\Delta \varphi + \frac{m^2}{|x|^2}\varphi + \varphi - |\varphi|^2\varphi + |\varphi|^3\varphi,
\]
and observe that
\begin{equation*}
\begin{aligned}
& \|\varphi_n-\psi\|_{H^1_m(\R^2)}^2 = \langle S(\varphi_n) - S(\psi), \varphi_n-\psi\rangle_{L^2} \\
&+ \int_{\R^2}\(|\varphi_n|^2\varphi_n - |\psi|^2\psi\)\(\bar{\varphi}_n-\bar{\psi}\)dx
- \int_{\R^2}\(|\varphi_n|^3\varphi_n - |\psi|^3\psi\)\(\bar{\varphi}_n-\bar{\psi}\)dx.
\end{aligned}
\end{equation*}
We deduce by H\"older's inequality that 
\[
\left|\int_{\R^2}\(|\varphi_n|^2\varphi_n - |\psi|^2\psi\)\(\bar{\varphi}_n-\bar{\psi}\)dx\right| \le \(\|\varphi_n\|_{L^4(\R^2)}^2 + \|\psi_n\|_{L^4(\R^2)}^2\)\|\varphi_n-\psi\|_{L^4(\R^2)}^2,
\]
which (by strong convergence in $L^p(\R^2)$) goes to zero, as $n\rightarrow \infty$.  Similarly,
\[
\left|\int_{\R^2}\(|\varphi_n|^2\varphi_n - |\psi|^2\psi\)\(\bar{\varphi}_n-\bar{\psi}\)dx\right| \le \(\|\varphi_n\|_{L^5(\R^2)}^3 + \|\psi_n\|_{L^5(\R^2)}^3\)\|\varphi_n-\psi\|_{L^5(\R^2)}^2
\]
tends to zero, as $n\rightarrow \infty$. We now show that for all $\eta\in C^{\infty}_0(\R^2)$
\begin{equation}\label{eq:convg0}
\langle S(\varphi_n) - S(\psi), \eta\rangle_{L^2} \rightarrow 0\ \text{as}\ n\rightarrow \infty.
\end{equation}
Indeed, we have
\begin{equation*}
\begin{aligned}
 \langle S(\varphi_n) - S(\psi), \eta\rangle_{L^2} = &\int_{\R^2}\(\nabla\varphi_n-\nabla\psi\)\nabla\bar{\eta}\,dx + \int_{\R^2}\(1+\frac{m^2}{|x|^2}\)\(\varphi_n-\psi\)\bar{\eta}\,dx\\
 &-  \int_{\R^2}\(|\varphi_n|^2\varphi_n - |\psi|^2\psi\)\bar{\eta}\,dx + \int_{\R^2}\(|\varphi_n|^3\varphi_n - |\psi|^3\psi\)\bar{\eta}\,dx.
\end{aligned}
\end{equation*}
 Since $\varphi_n$ converges weakly to $\psi$ in $H^1_m(\R^2)$ and strongly in $L^p(\R^2)$, \eqref{eq:convg0} follows.  This implies that $\varphi_n$ converges strongly to $\psi$ in $H^1_m(\R^2)$. Therefore, using the compactness together with the strong convergence, we get
 \[
E_m(\psi) = \lim_{n\to \infty}E_m(\varphi_{n})= e(\rho).
\]
This shows the existence of a minimizer for $e(\rho)$.  We point out that since we are considering the minimization problem along sequences $(\varphi_n)_{n\ge 1} \subset H^1_m(\R^2)\equiv H^1_{m, {\rm rad}} (\R^2)$, and thus over radially symmetric functions, we obtain a radially symmetric minimizer which is non-increasing w.r.t. $|x|$. Finally, since $E_m(\varphi)=E_m(|\varphi|)$ (in view of \cite[Theorem 7.8]{LiLo}) we may assume that the radially symmetric minimizer is positive.
\end{proof}

\begin{remark} The existence of constrained energy minimizers in the vortex-free case $m=0$ was recently proved in \cite{LewinRotaNodari2020} 
for a general class of competing (energy-subcritical) power-law nonlinearities
(see also \cite{CaSp1} for a detailed study of the cubic-quintic case in dimensions $d\le 3$). 
Earlier results in this direction are in the case of only a single (power-law) nonlinearity and can be found in, e.g., \cite{BL83a, BV2003, PL284a}.
In there, the authors seek to minimize the kinetic part of the energy under a constrained given by the potential energy plus the mass.
The drawback of this approach is that it does not allow to infer orbital stability even if $m=0$, in contrast to our approach (see below). 
\end{remark}


\section{Connection to the NLS with inverse-square potential} \label{sec:inverse}

An alternative way to think about the vortex state equation \eqref{eq:vortices} in $2$D is to re-interpret its solutions $\psi$ as 
steady states of the NLS with (repulsive) {\it inverse-square potential}. More precisely, consider
\begin{equation}\label{eq:iPnls}
\left\{
\begin{aligned}
 & i\d_t v +\frac{1}{2}\Delta v  - \frac{m^2}{2|x|^2} v= - |v|^2v +  |v|^3v,\quad  t \in \R,\ x\in \R^2,\\
  & v_{\mid t=0}  = v_0(x),
\end{aligned}  
\right.
\end{equation}
and observe that the associated solution $v(t,x)$ formally conserves the mass $M(v)$ and the vortex-energy $E_m(v)$. 
In addition, it is easy to see that if the initial data is radially symmetric, i.e., if 
$v_0=v_0(|x|)$, then so is the solution $v(t,x)$ for all $t\in \R$. 
Seeking nontrivial {\it radially symmetric solitary wave solutions} to \eqref{eq:iPnls} of the form 
\[
v(t,x) = e^{i\omega t}\psi (x), \quad \omega\in\R,  
\]
therefore lead to the study of vortex-profiles $\psi=\psi(|x|)$, solutions to \eqref{eq:vortices}. Moreover, the fact that 2D vortex ground states $\psi$ 
are obtained by minimizing the vortex-energy $E_m$, will allow us to infer the orbital stability of the set $\mathcal E(\rho)$ under the dynamics of \eqref{eq:iPnls}. 

To advance this line of reasoning, we start with a well-posedness result for equation \eqref{eq:iPnls} in $H^1_m(\R^2)$. To this end, we rewrite 
equation \eqref{eq:iPnls} using Duhamel's formula, i.e.,
\begin{equation}\label{eq:duhamel-ip}
v(t) = e^{i\frac{t}{2}\mathcal{L}_m}v_0 - i \int_0^t e^{i\frac{t-s}{2}\mathcal{L}_m}f(v(s)) \,ds =: \Phi_m(v(t)).
\end{equation}
Here, and in the following, we denote, as before, $f(z)=-|z|^2z+|z|^3z$, and
\[
\mathcal{L}_m := -\Delta + \frac{m^2}{|x|^2}, \quad m\in \Z.
\] 

Recall that the space 
$H^1_m(\R^2)\equiv H^1_{m, {\rm rad}} (\R^2)$ consists of radially symmetric functions. This allows us to invoke the arguments of \cite{GX2020}, 
where it is shown that for radial functions, the Sobolev space $\dot{H}^1_m(\R^2)$ is equivalent to the magnetic Sobolev space $\dot{H}^1_A(\R^2)$. 
The latter is defined as the completion of $C_0^\infty(\R^2\setminus \{ 0 \})$ under the semi-norm
\[
\| f \|^2_{\dot{H}^1_A(\R^2)} : = \int_{\R^2} | \nabla_A f |^2 \, dx,
\]
where $\nabla_A = \nabla + iA(x)$ and the vector potential is $A(x)=m \big(-\frac{x_2}{|x|^2},\frac{x_1}{|x|^2}\big)$. It is then proved in \cite[Section 2.1]{GX2020} that 
\[
\| f \|^2_{H^1_A(\R^2)} \simeq \| f \|^2_{H^1_m(\R^2)}, \quad \text{for radial $f$.}
\]
In addition, it is also shown that the linear magnetic Schr\"odinger operator 
\[
\(-i\nabla +A(x)\)^2 f = \mathcal L_m f, \quad \text{for radial $f$.}
\]
In turn this allows one to infer Strichartz estimates associated to the linear Schr\"odinger propagator with 
inverse square-potential $S_m(t) = e^{i\frac{t}{2}\mathcal{L}_m}$, see \cite{GX2020} and the references given 
therein. Using these, we are able to prove the following well-posedness result, which generalizes the one in \cite{GX2020}:

\begin{proposition}[Well-posedness with inverse-square potential]
For any $v_0 \in H^1_m(\R^2)$, there exists a unique global in-time solution $v \in C(\R;H^1_m(\R^2))$ to \eqref{eq:iPnls}, depending continuously on the initial data $v_0$. 
In addition, $v(t, \cdot)$ is radially symmetric and conserves the mass $M(v)$ and the vortex-energy $E_m(v)$.
\end{proposition}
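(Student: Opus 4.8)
The plan is to mirror the proof of Proposition~\ref{prop:LWP} and Theorem~\ref{thm:gwp}, but working throughout in the space $H^1_m(\R^2)\equiv H^1_{m,\mathrm{rad}}(\R^2)$ and replacing the free propagator $e^{i\frac t2\Delta}$ by $S_m(t)=e^{i\frac t2\mathcal{L}_m}$. The key input, already recorded in the excerpt, is that for radial functions $\mathcal L_m=(-i\nabla+A)^2$ and $\|f\|_{H^1_A}\simeq\|f\|_{H^1_m}$, so that the Strichartz estimates from \cite{GX2020} hold for $S_m(t)$ with the same admissible pairs as in $\R^2$. In particular one can use the very same pairs \eqref{eq:stri1}--\eqref{eq:stri2} (the $d=2$ choices $(4,4)$ and $(10/3,5)$), together with the diamagnetic/equivalence estimate, to control $\|\nabla_A\Phi_m(v)\|$ in the relevant Strichartz norms.

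First I would establish \emph{local} well-posedness by a contraction argument on a ball $B_M\subset C([0,T];H^1_m)\cap L^{q_1}((0,T);W^{1,r_1}_m)\cap L^{q_2}((0,T);W^{1,r_2}_m)$, where the $W^{1,r}_m$ norm is the one adapted to $\mathcal L_m$ (equivalently $\nabla_A$). The nonlinear estimates on $f(v)=-|v|^2v+|v|^3v$ are pointwise in $v$ and hence insensitive to the potential; one reuses verbatim the H\"older--Sobolev bounds from the proof of Proposition~\ref{prop:LWP}, picking up the same powers $T^{\theta_1}$, $T^{\theta_2}$ with $\theta_1=\tfrac12$, $\theta_2=\tfrac25$. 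The Strichartz estimates for $S_m(t)$ close the fixed-point argument exactly as before, giving $T=T(\|v_0\|_{H^1_m})>0$, uniqueness, continuous dependence, and the blow-up alternative $\lim_{t\to T}\|v(t)\|_{H^1_m}=\infty$ if $T<\infty$. One should also note that $S_m(t)$ preserves radial symmetry (the potential is radial), so the flow stays in $H^1_m(\R^2)$.

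Next I would promote this to a \emph{global} solution. The mass $M(v)$ and the vortex-energy $E_m(v)$ are formally conserved; this is proved by the regularization/limiting arguments of \cite{Kato1987} (or the approach of \cite{Oz06}), which go through unchanged since the extra term $\frac{m^2}{2|x|^2}|v|^2$ is a nonnegative quadratic form respected by the flow. Given conservation of $E_m$, I would apply the Peter--Paul/Young inequality \eqref{eq:youngs} exactly as in the proof of Theorem~\ref{thm:gwp}(1): since
\[
E_m(v)\ge \tfrac12\|v\|_{\dot H^1_m(\R^2)}^2-\tfrac{C(\varepsilon)}{2}M(v_0),
\]
the $H^1_m$-norm of $v(t)$ stays bounded uniformly in $t$, and the blow-up alternative forces $T=\infty$. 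Radial symmetry of the data propagates as noted, which is what makes the whole scheme legitimate.

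The main obstacle, and the only place where care beyond "copy the $\R^2$ proof" is needed, is the functional-analytic bookkeeping around $\mathcal L_m$: one must make sure the Strichartz estimates of \cite{GX2020} are available for the $W^{1,r}_m$-type norms (not merely $L^r$), i.e.\ that $\|\nabla_A S_m(t)v_0\|$ is controlled and that $\nabla_A$ commutes appropriately with $S_m(t)$, and that the equivalence $\|f\|_{H^1_A}\simeq\|f\|_{H^1_m}$ is used consistently when translating between the magnetic and inverse-square formulations. Since $m=0$ is degenerate ($\mathcal L_0=-\Delta$ and $H^1_0=H^1_{\mathrm{rad}}$), that case reduces to Theorem~\ref{thm:gwp} restricted to radial data, so one may assume $m\ne0$ where the inverse-square structure is genuinely present; there the positivity of the potential is exactly what is needed for coercivity and for conservation of $E_m$ along the flow. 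Once these points are in place, the proof is a routine adaptation of the earlier two results.
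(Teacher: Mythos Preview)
Your proposal is correct and follows essentially the same route as the paper: a contraction argument in $H^1_m$-based Strichartz spaces using the estimates from \cite{GX2020} and the magnetic/inverse-square equivalence, followed by the same coercivity bound \eqref{eq:youngs} together with conservation of $M$ and $E_m$ to globalize. The only cosmetic difference is that the paper distributes the H\"older exponents slightly differently (placing all factors in $L^\infty_tL^p_x$ via Sobolev, yielding powers $T^{3/4}$ and $T^{7/10}$ instead of your $T^{1/2}$ and $T^{2/5}$), which has no effect on the argument.
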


\begin{proof} Denote $\nabla_m = \nabla + m|x|^{-1}$. We shall first show that for any $v_0\in H^1_m(\R^2)$, there exists a $T>0$ and a 
unique solution $v(t)$ to the integral equation \eqref{eq:duhamel-ip}, depending continuously on $v_0$, such that
\[
v,\nabla_m v\in \mathcal{X}_m := C([0, T];L^2(\R^2))\cap L^{4}((0, T); L^{1,4}(\R^2))\cap L^{\frac{10}{3}}((0, T); L^{5}(\R^2)) .
\]
This follows by arguments similar to those given in the proof of Proposition \ref{prop:LWP}
To this end, let $M>0$, to be specified later, and 
\begin{equation*}
	\widetilde{B}_M=\{v\in \mathcal{X}_m\,:\|v\|_{L^{\infty}(I;H^1_m)}\leq M \}.
\end{equation*}
We shall prove that the operator $\Phi_m(v(t))$ defined in \eqref{eq:duhamel-ip} is a contraction on the set $\widetilde{B}_M$. Using Strichartz estimates (cf. \cite[Proposition 2.2]{GX2020}), we obtain
\begin{equation}
\begin{aligned}\label{Lwp1ip}
	\|\Phi_m(v(t))\|_{L^{\infty}(I;L^{2})} &+ \|\Phi_m(v(t))\|_{L^{4}(I;L^{4})} + \|\Phi_m(v(t))\|_{L^{\frac{10}{3}}(I;L^{5})} \\
	&\lesssim \| v_0\|_{L^2(\R^d)} + \||v|^2v\|_{L^{\frac{4}{3}}(I;L^{\frac{4}{3}})} + 
	 \||v|^3v\|_{L^{\frac{10}{7}}(I;L^{\frac{5}{4}})},
\end{aligned}
\end{equation}
and
\begin{equation}
\begin{aligned}\label{Lwp2ip}
	 \|\nabla_m\Phi_m(v(t))&\|_{L^{\infty}(I;L^{2})}  + \|\nabla_m\Phi_m(v(t))\|_{L^{4}(I;L^{4})} + \|\nabla_m\Phi_m(v(t))\|_{L^{\frac{10}{3}}(I;L^{5})} \\
	  \lesssim &\,\,\| \nabla_m v_0\|_{L^2(\R^d)} + \||v|^2 \nabla v\|_{L^{\frac{4}{3}}(I;L^{\frac{4}{3}})} + 
	\||v|^3 \nabla v\|_{L^{\frac{10}{7}}(I;L^{\frac{5}{4}})}\\
	& \  +  \|m|x|^{-1}|v|^2  v\|_{L^{\frac{4}{3}}(I;L^{\frac{4}{3}})} + 
	 \|m|x|^{-1}|v|^3 v\|_{L^{\frac{10}{7}}(I;L^{\frac{5}{4}})}.
\end{aligned}
\end{equation}
 Using H\"older's and Sobolev's inequality, we get
\begin{align*}
	\||v|^2v&\|_{L^{\frac{4}{3}}(I;L^{\frac{4}{3}})} + 
	 \||v|^3v\|_{L^{\frac{10}{7}}(I;L^{\frac{5}{4}})} \\
	&\lesssim T^{\frac{3}{4}}\|v\|^2_{L^{\infty}(I;L^{8})} \|v\|_{L^{\infty}(I;L^{2})} 
	+ T^{\frac{7}{10}}\| v\|^3_{L^{\infty}(I;L^{10})} \|v\|_{L^{\infty}(I;L^{2})}\\
	&\lesssim T^{\frac{3}{4}}\|\nabla v\|^2_{L^{\infty}(I;L^{2})} \|v\|_{L^{\infty}(I;L^{2})} 
	+ T^{\frac{7}{10}}\| \nabla v\|^3_{L^{\infty}(I;L^{2})} \|v\|_{L^{\infty}(I;L^{2})},
	\end{align*}
	 as well as
\begin{align*}
	\||v|^2 \nabla v\|_{L^{\frac{4}{3}}(I;L^{\frac{4}{3}})} + 
	\||v|^3 \nabla v\|_{L^{\frac{10}{7}}(I;L^{\frac{5}{4}})}
	\lesssim T^{\frac{3}{4}}\|\nabla v\|^3_{L^{\infty}(I;L^{2})}
	+ T^{\frac{7}{10}}\|\nabla v\|^4_{L^{\infty}(I;L^{2})},
\end{align*}
and
\begin{align*}
  \||x|^{-1}&|v|^2  v\|_{L^{\frac{4}{3}}(I;L^{\frac{4}{3}})} + 
 \||x|^{-1}|v|^3 v\|_{L^{\frac{10}{7}}(I;L^{\frac{5}{4}})}\\
	&\lesssim T^{\frac{3}{4}}\|\nabla v\|^2_{L^{\infty}(I;L^{2})} \||x|^{-1}v\|_{L^{\infty}(I;L^{2})} 
	+ T^{\frac{7}{10}}\| \nabla v\|^3_{L^{\infty}(I;L^{2})} \||x|^{-1}v\|_{L^{\infty}(I;L^{2})}.
	\end{align*}
Combining \eqref{Lwp1ip} and \eqref{Lwp2ip} together with the above three estimates, we obtain 
\begin{align}\label{Lwp5ip}\notag
	 \|\Phi_m(v(t))\|_{L^{\infty}(I;H^1_m)} +\:& \: \|(1+\nabla_m)\Phi_m(v(t))\|_{L^{4}(I;L^{4})} + \|(1+\nabla_m)\Phi_m(v(t))\|_{L^{\frac{10}{3}}(I;L^{5})}\\\notag
	 &\lesssim \|v_0\|_{H_m^1(\R^2)}+T^{\frac{3}{4}}\|\nabla v\|^2_{L^{\infty}(I;L^{2})} \|v\|_{L^{\infty}(I;H^{1}_m)} \\\notag
	 &\qquad\qquad\qquad\qquad+T^{\frac{7}{10}}\|\nabla v\|^3_{L^{\infty}(I;L^{2})} \|v\|_{L^{\infty}(I;H^{1}_m)}\\
	 &\le C\|v_0\|_{H^1_m(\R^2)} + CT^{\frac{3}{4}}M^3 + CT^{\frac{7}{10}}M^4,
\end{align}
for $v\in \widetilde{B}_M$. Set $M=2C\|v_0\|_{H^1_m(\R^2)}$ and take $T$ such that
\begin{align}\label{Lwp6ip}
	CT^{\frac{3}{4}}M^{2} + CT^{\frac{7}{10}}M^{3} \le \frac{1}{2},
\end{align}
yielding that \eqref{Lwp5ip} is bounded by $M$. Therefore, 
$\Phi_m$ maps $\widetilde{B}_M$ to itself for some time $T=T(\|v_0\|_{H^1_m})>0$. Similarly, one finds that for $v,w\in \widetilde{B}_M$:
\[
d(\Phi_m(v(t)),\Phi_m(w(t))) \lesssim \big(T^{\frac{3}{4}}M^{2} + T^{\frac{7}{10}}M^{3}\big)d(v,w),
\]
which together with the smallness condition \eqref{Lwp6ip} implies that $\Phi_m$ is a contraction on $\widetilde{B}_M$. In turn, this yields the existence of 
a unique solution $v \in C([0, T];H^1_m(\R^2))$, and continuous dependence on the initial data then follows by classical arguments. 

Finally, the proof that $v(t)$ can be extended for all $T>0$, yielding a global in-time solution, follows by invoking the conservation laws of mass $M(v)$ and vortex-energy $E_m(v)$, together with 
the same reasoning as in the proof of Theorem \ref{thm:gwp}(1). 
\end{proof}

Next, we recall that the vortex ground-states in Theorem \ref{thm:exist2d} are obtained by minimizing the energy $E_m$, subject to a mass constrained. The fact that $E_m$ is also 
conserved under the flow of \eqref{eq:iPnls} allows us to infer their orbital stability under this flow.

\begin{proposition}[Orbital stability of vortex-ground states]\label{prop:stability}
The set $\mathcal E(\rho)$, defined in \eqref{eq:Eminset2d}, is orbitally stable in $H^1_m(\R^2)$ under the dynamics with inverse square-potential. More precisely, for all $\eps>0$, there exists $\delta>0$ such that if
 $v_0\in H_{m}^1(\R^2)$ satisfies
 \[\inf_{\psi\in \mathcal E(\rho)}\|v_0-\psi\|_{H^1_{m}(\R^2)}\le \delta,\]
  then the solution to \eqref{eq:iPnls} with $v_{\mid t=0}=v_0$ satisfies
  \begin{equation*}
    \sup_{t\in \R}\inf_{\psi\in \mathcal E(\rho)}\left\|v(t)-\psi\right\|_{H^1_{m}(\R^2)}\le \eps. 
  \end{equation*}
\end{proposition}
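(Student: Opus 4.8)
The plan is to run the classical Cazenave--Lions stability argument, adapted to the radial space $H^1_m(\R^2)$ and to the inverse-square flow \eqref{eq:iPnls}. Its single substantial ingredient is the following compactness statement, which is exactly what the proof of Theorem \ref{thm:exist2d} establishes (and which goes through verbatim if the mass constraint is only attained in the limit): if $(w_n)\subset H^1_m(\R^2)$ satisfies $M(w_n)\to\rho$ and $E_m(w_n)\to e(\rho)$ with $\rho>\rho_*$, then, along a subsequence, $w_n\to\psi$ strongly in $H^1_m(\R^2)$ for some $\psi\in\mathcal E(\rho)$. Granting this, I would argue by contradiction: if orbital stability fails there are $\eps_0>0$, initial data $v_{0,n}\in H^1_m(\R^2)$ with $\inf_{\psi\in\mathcal E(\rho)}\|v_{0,n}-\psi\|_{H^1_m}\to0$, and times $t_n\in\R$ with $\inf_{\psi\in\mathcal E(\rho)}\|v_n(t_n)-\psi\|_{H^1_m}\ge\eps_0$, where $v_n$ is the global (by the previous proposition) solution of \eqref{eq:iPnls} with datum $v_{0,n}$.

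The next step is to transfer closeness to $\mathcal E(\rho)$ from $t=0$ to $t=t_n$ using the conservation laws. Choosing $\psi_n\in\mathcal E(\rho)$ with $\|v_{0,n}-\psi_n\|_{H^1_m}\to0$, the $(v_{0,n})$ lie in a bounded set of $H^1_m(\R^2)$ (the $\psi_n$ are uniformly bounded by the coercivity bound $E_m(\cdot)\ge\frac12\|\cdot\|_{\dot{H}^1_m}^2-C(\rho)$ together with $M(\psi_n)=\rho$), hence by continuity of $M$ and $E_m$ on bounded subsets of $H^1_m(\R^2)$ (the quartic and quintic terms are controlled by $\|\cdot\|_{H^1_m}$ via Sobolev embedding) one gets $M(v_{0,n})\to\rho$ and $E_m(v_{0,n})\to e(\rho)$. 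Conservation of mass and of the vortex-energy along \eqref{eq:iPnls} then gives $M(v_n(t_n))\to\rho$ and $E_m(v_n(t_n))\to e(\rho)$, so that $(v_n(t_n))_n$ is itself a minimizing sequence in the sense above; moreover it is radially symmetric, since $v_{0,n}\in H^1_m(\R^2)\equiv H^1_{m,{\rm rad}}(\R^2)$ and the flow \eqref{eq:iPnls} preserves radiality. Applying the compactness statement, a subsequence of $v_n(t_n)$ converges strongly in $H^1_m(\R^2)$ to some $\psi\in\mathcal E(\rho)$, which contradicts $\inf_{\psi\in\mathcal E(\rho)}\|v_n(t_n)-\psi\|_{H^1_m}\ge\eps_0$. (No quotient by translations is needed here: the inverse-square potential breaks translation invariance and everything takes place in the radial space, so $\mathcal E(\rho)$ is already invariant under the only relevant symmetry, the global phase.)

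The genuine obstacle is the compactness statement itself, and within it the exclusion of loss of $L^2$-mass to spatial infinity. The radial compact embedding $H^1_m(\R^2)\hookrightarrow L^p(\R^2)$ for $2<p<\infty$ (valid because $\|\cdot\|_{H^1_m}$ controls the full radial $H^1$-norm) lets one pass to the limit in the subcritical terms $\|w_n\|_{L^4}^4$, $\|w_n\|_{L^5}^5$ and already rules out vanishing in those norms, and weak lower semicontinuity gives $E_m(\psi)\le e(\rho)$ for the weak $H^1_m$-limit $\psi$. What is \emph{not} automatic is $\|\psi\|_{L^2}^2=\rho$, since radial $H^1_m$ does not embed compactly in $L^2$; here one must use the minimizing property. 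The Brezis--Lieb-type splitting $E_m(w_n)=E_m(\psi)+\frac12\|w_n-\psi\|_{\dot{H}^1_m}^2+o(1)$, combined with the strict monotonicity of $\rho\mapsto e(\rho)$ on $(\rho_*,\infty)$ --- which can be read off from the amplitude- and $L^2$-preserving-dilation scaling identities satisfied by minimizers, exactly as in Step 1 of the proof of Theorem \ref{thm:exist2d} --- forces $\|\psi\|_{L^2}^2=\rho$ and $E_m(\psi)=e(\rho)$, whence $\|w_n-\psi\|_{\dot{H}^1_m}\to0$ and $\|w_n-\psi\|_{L^2}^2=\rho-\|\psi\|_{L^2}^2+o(1)=o(1)$, i.e.\ strong convergence in $H^1_m(\R^2)$ with $\psi\in\mathcal E(\rho)$. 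The remaining pieces --- the conservation laws from the previous proposition, the continuity of $M$ and $E_m$, and the elementary boundedness estimates --- are routine.
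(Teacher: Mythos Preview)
Your proposal is correct and follows essentially the same Cazenave--Lions contradiction argument as the paper: assume a sequence of initial data approaching $\mathcal E(\rho)$ whose solutions drift away at some times $t_n$, use conservation of $M$ and $E_m$ to turn $(v_n(t_n))_n$ into a minimizing sequence, then invoke the compactness established in Theorem \ref{thm:exist2d} to obtain a contradiction. The only minor difference is in the handling of the mass constraint: the paper normalizes by ``rescaling the mass by an appropriate factor'' so that the sequence sits exactly on the constraint $M=\rho$, whereas you allow $M(v_n(t_n))\to\rho$ and close the $L^2$-gap via strict monotonicity of $\rho\mapsto e(\rho)$ and a Brezis--Lieb splitting; both are standard and equivalent routes to the same conclusion.
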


\begin{proof}
We follow the approach of \cite{CaLi}: Assume, by contradiction, that
there exist a sequence of (radially symmetric) initial data $(v_{0,n})_{n\in \N}\subset H^1_{m}(\R^2)$ with
\begin{equation}\label{eq:stability1}
  \lim_{n\to \infty}\|v_{0,n}-\psi\|_{H^1_{m}(\R^2)}= 0,
\end{equation}
and a sequence $(t_n)_{n\in \N}\subset \R$, such that the sequence of solutions $v_n(t_n)$ to \eqref{eq:iPnls} associated to $v_{0,n}$ satisfies
\begin{equation}\label{eq:stability2}
  \inf_{\psi\in \mathcal E(\rho)}\left\|v_n(t_n) -
    \psi\right\|_{H^1_{m}(\R^2)}>\eps,
\end{equation}
for some $\eps>0$.
In view of \eqref{eq:stability1}, we find that, on the one hand:
\begin{equation*}
  \lim_{n\to \infty}\|v_{0,n}\|_{L^2(\R^2)}^2 = \|\psi\|_{L^2(\R^2)}^2\equiv \rho,\quad
  \lim_{n\to \infty}E_m(v_{0,n}) = E_m(\psi)=e(\rho).
  \end{equation*}
On the other hand, the conservation laws for mass and energy imply
\begin{equation*}
 \begin{aligned}
 \lim_{n\to \infty}\|v_n(t_n)\|_{L^2(\R^2)}^2 &= \lim_{n\to \infty}\|v_{0,n}\|_{L^2(\R^2)}^2\equiv \rho,\\
   \lim_{n\to \infty}E_m(v_n(t_n))= &\lim_{n\to \infty}E_m(v_{0,n})=e(\rho).
\end{aligned}
\end{equation*}
Thus, up to rescaling the mass by an appropriate factor, $(v_n(t_n))_n$ is a minimizing sequence for the constrained minimization problem
\eqref{eq:Emin2d}. By the arguments outlined in the proof of Proposition \ref{thm:exist2d}, there exist a
strongly convergent subsequence $(v_n(t_n))_n\subset H^1_m(\R^2)$ with limit $\psi$. This is a contradiction to \eqref{eq:stability2}. 
\end{proof}

Central vortex states with $m\not =0$ can be seen as excited states for equation \eqref{eq:soliton-m}. In the case of a single (focusing) power-law nonlinearity, they 
are known to be orbitally unstable, in general, even though some of them appear to be 
spectrally stable. For a broader discussion on all this we see, e.g., \cite{CGNT, Mi}, and in particular the introduction of \cite{CuMa2016}. In the case with 
competing power-law nonlinearities, numerical simulations seem to indicate the existence of a critical value $\omega_{\rm cr}$, above 
which vortex solution are spectrally stable \cite{PeWa}, but a complete picture is still missing. 
In view of our orbital stability result, one might conjecture that the orbital instability of 2D vortiex states is essentially due to the difference between Schr\"odinger flows 
with and without repulsive inverse square potential.

An immediate consequence of Proposition \ref{prop:stability} 
is the orbital stability of constrained energy minimizers in the case of zero vorticity, i.e., $m=0$, where the dynamics of \eqref{eq:iPnls} reduces to the one 
of the original NLS \eqref{eq:nls}. The corresponding minimization problem reduces to
\[
e_0(\rho) = \inf\{E_0(\varphi)\ : \ \varphi\in H_{\rm rad}^1(\R^2),\ M(\varphi) = \rho\}.
\]
The associated set of energy minimizers, usually called {\it energy ground states}, is denoted by $\mathcal E_0(\rho)$. The members of this set are known to 
be real-valued and positive, cf. \cite{LewinRotaNodari2020}. In particular,  
they do not carry any vorticity. 

\begin{corollary}
If $m=0$, the set $\mathcal E_0(\rho)$ is orbitally stable in $H_{\rm rad}^1(\R^2)$ under the flow of \eqref{eq:nls}.
\end{corollary}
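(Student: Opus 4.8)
The plan is to observe that the Corollary is essentially a special case of Proposition~\ref{prop:stability} combined with the remarks already made in the text. When $m=0$, the inverse-square potential $\frac{m^2}{2|x|^2}$ vanishes identically, so equation \eqref{eq:iPnls} becomes exactly the original NLS \eqref{eq:nls}, the vortex-energy $E_m$ reduces to the energy $E_0=E$, the space $H^1_m(\R^2)$ becomes $H^1_{\rm rad}(\R^2)$, and $\mathcal E(\rho)$ becomes $\mathcal E_0(\rho)$. Thus there is genuinely nothing new to prove beyond checking that the hypotheses of Proposition~\ref{prop:stability} and of Theorem~\ref{thm:exist2d} (and of the intermediate well-posedness statements) remain valid, or rather remain \emph{non-vacuous}, at $m=0$. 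First I would spell out this dictionary of identifications explicitly, so the reader sees that the orbital-stability statement for \eqref{eq:iPnls} literally specializes to the claimed statement for \eqref{eq:nls} restricted to radial data.

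The second step is to confirm that the nonexistence/threshold phenomena do not degenerate. The threshold mass is $\rho_* = \|Q_0\|_{L^2(\R^2)}^2$, where $Q_0$ is the ground state of $-\tfrac12\Delta Q_0 + Q_0 - Q_0^3 = 0$ on $\R^2$ (the standard cubic NLS soliton), and Theorem~\ref{thm:exist2d} guarantees $\mathcal E_0(\rho)\neq\emptyset$ for every $\rho>\rho_*$, with a positive radial non-increasing representative; this is precisely the set whose stability we assert. One should note that the radial well-posedness theory used in Proposition~\ref{prop:stability} also holds at $m=0$: for $m=0$ the propagator $e^{i\frac t2\mathcal L_0}=e^{i\frac t2\Delta}$ and the required Strichartz estimates are just the classical ones used in Proposition~\ref{prop:LWP}, restricted to radial data (radial symmetry is preserved by the free Schr\"odinger flow and by the nonlinearity). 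Hence global well-posedness in $H^1_{\rm rad}(\R^2)$ with conservation of $M$ and $E_0$ is available, which is exactly what the compactness/contradiction argument in the proof of Proposition~\ref{prop:stability} consumes.

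The third step is simply to run the argument of Proposition~\ref{prop:stability} verbatim with $m=0$: given a sequence of radial initial data converging in $H^1_{\rm rad}(\R^2)$ to some $\psi\in\mathcal E_0(\rho)$ and times $t_n$ along which the solutions stay $\eps$-away from $\mathcal E_0(\rho)$, conservation of mass and energy forces $(v_n(t_n))_n$ to be (after a harmless mass rescaling) a minimizing sequence for $e_0(\rho)$, and the compactness of minimizing sequences established in the proof of Theorem~\ref{thm:exist2d} — which at $m=0$ relies on the radial compact embedding $H^1_{\rm rad}(\R^2)\hookrightarrow L^p(\R^2)$ for $2<p<\infty$ — yields a strongly convergent subsequence with limit in $\mathcal E_0(\rho)$, contradicting the standoff assumption. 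I expect the only point requiring a word of care is the mild change in the well-posedness bookkeeping: at $m=0$ the estimates in the proof of Proposition with inverse-square potential simplify (there is no $m|x|^{-1}$ term), so one should point out that the global theory is in fact \emph{easier} and reduces to Theorem~\ref{thm:gwp}(1) restricted to the radial class, rather than attempting to re-derive it. No genuine obstacle arises; the content of the Corollary is the bookkeeping that the $m=0$ case of \eqref{eq:iPnls} is \eqref{eq:nls} and that every ingredient survives the specialization.
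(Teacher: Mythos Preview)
Your proposal is correct and matches the paper's approach: the paper presents the corollary as an immediate consequence of Proposition~\ref{prop:stability}, noting in the preceding text that for $m=0$ the dynamics of \eqref{eq:iPnls} reduces to that of \eqref{eq:nls}. Your write-up simply makes explicit the identifications ($E_0=E$, $H^1_0=H^1_{\rm rad}$, $e^{i\frac t2\mathcal L_0}=e^{i\frac t2\Delta}$, etc.) that the paper leaves implicit.
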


\begin{remark}
Note that if $\psi\in \mathcal E_0(\rho)$, then so is $e^{i \theta} \psi( \cdot + x_0)$, for $x_0\in \R^2$, $\theta \in \R$, due to gauge and translation invariance. 
In contrast to the case with only a single (focusing) 
power-law nonlinearity, it is not known whether $\mathcal E_0(\rho)$ consists of a unique element, up to the aforementioned symmetries. Uniqueness, up to symmetries, is usually  
obtained by following the approach of minimizing, for any given frequency $\omega\in (0, \tfrac{25}{216})$, the unconstrained action functional 
$$S(\phi) = E_0(\phi) + \omega M(\phi),$$ 
over all nontrivial $\phi\in H^1(\R^2)$ solutions to \eqref{eq:soliton-m} see, e.g. \cite{CazCourant}. A drawback of these action minimizers 
is that their orbital stability cannot be established along the same lines as before. In particular, the map $\omega \mapsto \rho$ is not necessarily one-to-one.
For the case with competing power-law nonlinearities, it has recently been shown that there exist least action solutions to \eqref{eq:soliton-m}, 
which are not equal to constrained energy minimizers, see \cite{CKS, JJL1, JJL2, LewinRotaNodari2020} for a broader discussion on all this.
\end{remark}


\section{Existence of vortex states in 3D}\label{sec:minimizer3d}

We now turn to the question of existence of vortex ground-states in 3D. To this end, we consider a similar minimization problem as in Section \ref{sec:minimizer2d}:
  \begin{equation}\label{eq:Emin} 
 e(\rho)=    \inf \{ E_m(\varphi)\ : \  \varphi\in H^1_m(\R^3),  \ M(\varphi) = \rho \}.
  \end{equation}
As before, we shall characterize the corresponding set of (constrained) energy minimizers by
  \begin{equation*}
    \mathcal  E(\rho):=\{\psi \in H^1_m(\R^3),\  M(\psi)=\rho,\ E_m(\psi) = e(\rho)\}.
  \end{equation*}
 Recall $H^1_m(\R^3)$ is the space of cylindrically symmetric functions of the form $\psi(r,z)$ with $r=\sqrt{x_1^2+x_2^2}$. We then have an existence result in 3D, similar to Theorem \ref{thm:exist2d}.

\begin{proposition}[Existence of vortex-ground states in 3D]
\label{prop:exist3d}
There exists a $\rho_*>0$, such that for all $\rho>\rho_*$ the set of of energy minimizers  $\mathcal E(\rho)\not = \emptyset$. Moreover, for any $\rho>\rho_*$ there exists at least one 
minimizer $\psi\in \mathcal E(\rho)$ which is  
positive, radial in the first two coordinates, i.e., $\psi=\psi(r,z)$ and non-increasing in $|z|$.
\end{proposition}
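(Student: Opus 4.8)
The plan is to establish compactness of minimizing sequences for \eqref{eq:Emin} via the concentration-compactness principle, adapted to the cylindrical symmetry of $H^1_m(\R^3)$. The key structural difference from the 2D case is that we no longer have a compact embedding $H^1_m \hookrightarrow L^p$, because translations in the $z$-direction preserve both the mass constraint and the cylindrically symmetric class. Accordingly, the argument should only yield minimizers \emph{up to translation in $z$}, consistent with the gauge- and $z$-translation invariance of \eqref{eq:vortices} noted after that equation.

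First I would verify, exactly as in Step 1 of the proof of Theorem \ref{thm:exist2d}, that $-\infty < e(\rho) < 0$ for $\rho$ large: the lower bound comes from \eqref{eq:youngs}, which gives $E_m(\varphi) \ge \tfrac12 \|\varphi\|_{\dot H^1_m(\R^3)}^2 - C(\varepsilon)\rho/2$, and the upper bound $e(\rho)<0$ for $\rho > \rho_*$ comes from testing with a suitably rescaled trial profile, using the relevant Gagliardo--Nirenberg inequality in 3D to make the quadratic-in-$\lambda$ part of $E_m(\lambda \varphi(\lambda\,\cdot))$ negative while the quartic correction is higher order in $\lambda$. This also shows every minimizing sequence is bounded in $H^1_m(\R^3)$, hence (up to subsequence) converges weakly to some $\psi$.

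Next I would run the concentration-compactness dichotomy on the mass densities $|\varphi_n|^2$. Vanishing is excluded because it would force $\|\varphi_n\|_{L^4}, \|\varphi_n\|_{L^5} \to 0$ (using the boundedness in $H^1_m$ together with a Lions-type vanishing lemma adapted to the cylindrical setting), which would give $\liminf E_m(\varphi_n) \ge 0$, contradicting $e(\rho)<0$. Dichotomy is excluded by the strict subadditivity inequality $e(\rho) < e(\alpha) + e(\rho-\alpha)$ for $0<\alpha<\rho$; this is where the quartic (repulsive) term must be controlled carefully, but strict subadditivity should follow from the scaling $e(\theta\rho) < \theta\, e(\rho)$ for $\theta>1$ (valid since $e(\rho)<0$ and the nonlinearities are superquadratic), combined with a standard argument. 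Hence the only alternative is compactness: up to a sequence of translations $\varphi_n(\cdot) \to \varphi_n(\cdot + (0,0,z_n))$ in the $z$-variable, the sequence is tight, so $\varphi_n(\cdot + z_n) \to \psi$ strongly in $L^2$ and (interpolating with the $H^1_m$ bound) in every $L^p$, $2 < p < 6$. Lower semicontinuity of the $\dot H^1_m$ norm then forces $E_m(\psi) \le e(\rho)$ with $M(\psi)=\rho$, hence equality, so $\psi \in \mathcal E(\rho)$.

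Finally, to obtain the qualitative properties: since $E_m(\varphi) = E_m(|\varphi|)$ by \cite[Theorem 7.8]{LiLo} we may take $\psi \ge 0$; radial symmetry in $(x_1,x_2)$ is built into $H^1_m(\R^3)$; and non-increasingness in $|z|$ follows by replacing $\psi(r,\cdot)$ with its one-dimensional symmetric-decreasing rearrangement in $z$ for each fixed $r$, which preserves $M$ and the $L^4$, $L^5$ norms and does not increase $\int |\d_z \psi|^2$ (nor the other terms, which are $z$-rearrangement invariant), so the rearranged function is still a minimizer. Positivity (strict) then follows from the strong maximum principle applied to \eqref{eq:vortices}. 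I expect the main obstacle to be the strict subadditivity / exclusion of dichotomy step, since the competing sign of the quartic term means one cannot simply quote a monotone nonlinearity result; care is needed to show the binding inequality is strict.
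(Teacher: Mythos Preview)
Your proposal is correct and follows essentially the same route as the paper: concentration--compactness adapted to cylindrical symmetry, with vanishing excluded via $e(\rho)<0$, dichotomy excluded via the scaling inequality $e(\theta\alpha)<\theta\, e(\alpha)$, compactness yielding a minimizer up to $z$-translation, and the qualitative properties obtained by Steiner-type rearrangement in $z$.

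One point to flag: the paper does \emph{not} establish $e(\rho)<0$ ``exactly as in Step~1 of the proof of Theorem~\ref{thm:exist2d}''. The 2D argument relies on the $L^2$-invariant scaling $\lambda\varphi(\lambda\,\cdot)$ making both the kinetic and the $L^4$ contributions quadratic in $\lambda$, so that their difference governs the sign for small $\lambda$. In 3D the map $\varphi\mapsto\lambda\varphi(\lambda\,\cdot)$ is no longer mass-preserving, and under the correct $L^2$-invariant 3D scaling $\lambda^{3/2}\varphi(\lambda\,\cdot)$ the kinetic, $L^4$, and $L^5$ terms scale as $\lambda^2$, $\lambda^3$, and $\lambda^{9/2}$, so your ``quadratic-in-$\lambda$ part'' structure disappears. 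The paper instead uses the dilation $\varphi_\rho(x)=\varphi(\rho^{-1/3}x)$ and observes that $\rho\mapsto E_m(\varphi_\rho)$ is strictly concave, which forces negativity for large $\rho$; the same dilation (with parameter $\theta^{-1/3}$) is then reused to prove $e(\theta\alpha)<\theta\, e(\alpha)$ in the dichotomy step. Apart from this technical adjustment, your outline matches the paper's proof.
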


\begin{proof} The proof follows by invoking a concentration-compactness method of \cite{PL284a}, but will require some adjustments due to working with cylindrically symmetric functions in $d=3$. 

\smallskip
{\bf Step 1:} We start by recollecting that $e(\rho) > -\infty$, which follows from \eqref{eq:youngs}. 
Now we consider a different scaling from 2D case, $\varphi_{\rho} = \varphi(\rho^{-1/3}x)=\varphi(\rho^{-1/3}r,\rho^{-1/3}z)$ such that $\|\varphi_{\rho}\|^2_{L^2(\R^3)} = \rho^2$ whenever $\|\varphi\|^2_{L^2(\R^3)} = \rho$. We then write 
\[
E_m(\varphi_{\rho}) = \frac{\rho^{\frac{1}{3}}}{2}\|\varphi\|_{\dot{H}^1_m(\R^3)}^2 - \frac{\rho}{2}\|\varphi\|^4_{L^4(\R^3)} + \frac{2\rho}{5}\|\varphi\|^5_{L^5(\R^3)},
\]
and compute
 \[
 \frac{\partial^2 E_m(\varphi_\rho)}{\partial\rho^2} = -\frac{1}{9\rho^{\frac{5}{3}}}\|\varphi\|_{\dot{H}^1_m(\R^3)}^2 < 0.
 \] 
 Thus, one can find a  $\rho_*$ such that $E_m(\varphi_\rho)$ is strictly decreasing and hence, negative for all $\rho > \rho_*$.  
We now characterize the mass constraint threshold $\rho_*$ via Gagliardo-Nirenberg type inequality (a straightforward adaption of the argument presented in \cite{LewinRotaNodari2020}). Indeed, from the concavity of $E_m$, we have that $E_m$ is non-negative for all $\varphi \in H^1_m(\R^3)$ such that $\rho < \rho_*$. This yields 
\[
\frac{1}{2}\int_{\R^3}|\varphi|^4\,dx\le \frac{1}{2}\int_{\R^3}|\nabla \varphi|^2\,dx + \frac{m^2}{2}\int_{\R^3}\frac{|\varphi|^2}{r^2}\,dx + \frac{2}{5} \int_{\R^3}|\varphi|^5\,dx.
\]
Substituting $\varphi$ with $ \sigma^{\frac{3}{2}}\varphi(\sigma x)$ in the above inequality gives 
\[
\frac{\sigma^3}{2}\int_{\R^3}|\varphi|^4\,dx \le \frac{\sigma^2}{2}\(\int_{\R^3}|\nabla \varphi|^2\,dx + m^2\int_{\R^3}\frac{|\varphi|^2}{r^2}\,dx\) + \frac{2\sigma^\frac{9}{2}}{5}\int_{\R^3}|\varphi|^5\,dx.
\]
Optimizing the last inequality with respect to $\sigma$, we get
\[
\|\varphi\|_{L^4(\R^3)}^4 \le \frac{7}{\sqrt[5]{3^35^2}} \|\varphi\|_{\dot{H}^1_m(\R^3)}^{\frac{6}{5}}\|\varphi\|_{L^5(\R^3)}^2.
\]
We now consider any vortex state solution $\varphi_*$ of the equation \eqref{eq:vortices} such that $M(\varphi_*) = \rho_*$. We normalize $\varphi$ with respect to $L^2$ norm, i.e,  replace $\varphi$ with $\displaystyle\frac{(\rho_*)^{\frac{1}{2}}}{\|\varphi\|_{L^2(\R^3)}}\varphi$ in the last estimate, which gives
\[
\|\varphi\|_{L^4(\R^3)}^4 \leq \frac{7}{\sqrt[5]{3^35^2}}\(\frac{\|\varphi\|_{L^2(\R^3)}}{(\rho_*)^{\frac{1}{2}}}\)^{\frac{4}{5}} \|\varphi\|_{\dot{H}^1_m(\R^3)}^{\frac{6}{5}}\|\varphi\|_{L^5(\R^3)}^2.
\]
Therefore, the best constant in the Gagliardo-Nirenberg inequality
\begin{equation}\label{eq:GN3d}
\|\varphi\|_{L^4(\R^3)}^4 \leq K_m\|\varphi\|_{L^2(\R^3)}^{\frac{4}{5}} \|\varphi\|_{\dot{H}^1_m(\R^3)}^{\frac{6}{5}}\|\varphi\|_{L^5(\R^3)}^2
\end{equation}
is given by 
\begin{equation*}
K_{m} = \frac{7}{\sqrt[5]{3^35^2}}\(\rho_*\)^{-\frac{2}{5}}. 
\end{equation*}
Hence, 
\[
\rho_* = \sqrt{\frac{7^5}{3^35^2}}\,\(K_m\)^{-\frac{5}{2}},
\]
is uniquely determined from the best constant $K_m$ in \eqref{eq:GN3d}.
Therefore, we fix the mass $\rho>\rho_*$ such that $-\infty < e(\rho) < 0$ and select a minimizing sequence $(\varphi_n)_{n\ge 0} \subset H^1_m(\R^3)$ for \eqref{eq:Emin}.

\smallskip
{\bf Step 2:} We first prove that vanishing does not occur in the $z$-direction. Since $E_m(\varphi_n)\rightarrow e(\rho)<0$, we have that for $n$ sufficiently large, $E_m(\varphi_n)\le e(\rho)/2$. Therefore, $ \|\varphi_n\|_{L^4(\R^3)}^4\ge   |e(\rho)| > 0.$ We now decompose $\R^3$ as the disjoint union of $S_k=\R^2\times (k,k+1)$ (or $\R^+\times (k,k+1)$ when $x=(r,z)$) for $k\in\Z$, and write
\begin{align*}
0 < |e(\rho)| &\le \int_{\R^3} |\varphi_n|^4\,dx = \sum_{k} \int_{S_k} |\varphi_n|^4\,rdrdz\\
&\le \sum_{k} \left(\int_{S_k} |\varphi_n|^4\,rdrdz\right)^{\frac{1}{2}}\left(\int_{S_k} |\varphi_n|^4\,rdrdz\right)^{\frac{1}{2}}\\
&\le \sup_k \left(\int_{S_k} |\varphi_n|^4\,rdrdz\right) \sum_{k} \left(\int_{S_k} |\varphi_n|^4\,rdrdz\right)^{\frac{1}{2}}\\
&\lesssim \sup_k \left(\int_{S_k} |\varphi_n|^4\,rdrdz\right) \|\varphi_n\|_{H^1_m(\R^3)},
\end{align*}
where in the last step we have used the compactness lemma for bounded domain in variable $z$ (see, e.g., \cite[Lemma 2.1]{BBBS} or \cite[Lemma 5]{BV2003}) and summed up to recover the entire space. We next define 
\[
F_n(z):= \int_{\R^+}|\varphi_n(\cdot ,z)|^4\,rdr,\  \text{then}\ \sup_k\left(\int_{k}^{k+1} F_n\,dz\right) > 0, 
\]
and thus, no vanishing occurs.

\smallskip
{\bf Step 3:}  Next, if dichotomy occurs, for all $\alpha \in (0,\rho)$, we have
\begin{equation}
\label{eq:sub-add}
e(\rho) < e(\alpha) + e(\rho-\alpha).
\end{equation}
Indeed, using the scaling, $\varphi_\theta(x) = \varphi(\theta^{-1/3}x)=\varphi(\theta^{-1/3}r,\theta^{-1/3}z)$ with $\theta \in (1,\tfrac{\rho}{\alpha}]$, we have
\begin{align*}
e(\theta\alpha) &= \inf_{M(\varphi_\theta)=\theta\alpha} \left\{ \frac{1}{2}\| \varphi_\theta|\|^2_{\dot{H}^1_m} -\frac{1}{2}\|\varphi_\theta\|^4_{L^4(\R^3)} + \frac{2}{5} \|\varphi_\theta\|^5_{L^5(\R^3)}\right\}\\
&=\inf_{M(\varphi)=\alpha} \left\{ \frac{\theta^{\frac{1}{3}}}{2}\| \varphi|\|^2_{\dot{H}^1_m} -\frac{\theta}{2}\|\varphi\|^4_{L^4(\R^3)} + \frac{2\theta}{5} \|\varphi\|^5_{L^5(\R^3)}\right\}\\
&< \theta \inf_{M(\varphi)=\alpha} \left\{ \frac{1}{2}\| \varphi|\|^2_{\dot{H}^1_m} -\frac{1}{2}\|\varphi\|^4_{L^4(\R^3)} + \frac{2}{5} \|\varphi\|^5_{L^5(\R^3)}\right\} = \theta e(\alpha).
\end{align*}
Therefore, if $\alpha \geq \rho - \alpha$, we have
\[
e(\rho) < \frac{\rho}{\alpha}e(\alpha) = \left(1+\frac{\rho-\alpha}{\alpha}\right)e(\alpha) \le e(\alpha) + e(\rho-\alpha).
\]
 Let $\xi=(r,z)\in\R^2$ and choose cut-off function $\theta_k\in C^{\infty}(\R^2)$, such that 
\[
0\leq \theta_k\le 1,\ \ \ \theta_k =1\,\,\text{for}\,\, |\xi|\le R_k,\ \ \ \theta_k =0\,\,\text{for}\,\, |\xi|\ge  2R_k,\ \ \ |\nabla \theta_k|\le \frac{2}{R_k}.  
\]
Set 
\[
\varphi_{k,1}(r,z) = \varphi_{n_k}(r,z)\theta_k(r,z)\in H^1_m(\R^3)\ \text{ and }\ \varphi_{k,2} = \varphi_{n_k}(r,z)(1-\theta_k(r,z))\in H^1_m(\R^3).
\] 
Then, the sequences, $(\varphi_{k,1})_{k\ge 0}$, $(\varphi_{k,2})_{k\ge 0}$ satisfy
\begin{equation*}
  \operatorname{supp} \varphi_{k,1}\cap \operatorname{supp} \varphi_{k,2}=\emptyset,\,\qquad |\varphi_{k,1}| + |\varphi_{k,2}| \le |\varphi_{n_k}|,
\end{equation*}  
\begin{equation*}
  \|\varphi_{k,1}\|_{H^1_m(\R^3)} + \|\varphi_{k,2}\|_{H^1_m(\R^3)}\le C\|\varphi_{n_k}\|_{H^1_m(\R^3)}.
\end{equation*}  
Moreover,
\begin{equation*} 
  \|\varphi_{k,1}\|_{L^2(\R^3)}^2\Tend k \infty \alpha,\quad
    \|\varphi_{k,2}\|_{L^2(\R^3)}^2\Tend k \infty \rho-\alpha,
  \end{equation*}
 \begin{equation*}
 \left|\int_{\R^3} |\varphi_{n_k}|^p -\int_{\R^3}|\varphi_{k,1}|^p- \int_{\R^3}|\varphi_{k,2}|^p\right|\Tend k \infty0,
\end{equation*}
for all $2\le p<6$. We now compute
\begin{equation*}
\begin{aligned}
\|\varphi_{k,1}\|_{\dot{H}^1_m(\R^3)}^3 = \int_{\R}&\(\int_{\R^+} \(|\nabla (\varphi_{n_k}\theta_k)|^2 + m^2\frac{|\varphi_{n_k}\theta_k|^2}{|x|^2}\)rdr\)dz\\
= \int_{\R} &\(\int_{\R^+}\theta_k^2\(|\nabla\varphi_{n_k}|^2 + m^2\frac{|\varphi_{n_k}|^2}{|x|^2}\) rdr\)dz\\
&+ \int_{\R}\( \int_{\R^+} \big[|\varphi_{n_k}|^2|\nabla\theta_k|^2 + 2\RE\(\varphi_{n_k}\theta_k\nabla\bar{\varphi}_{n_k}\nabla\theta_k\)\big]rdr\)dz.
\end{aligned}
\end{equation*}
We then have
\begin{equation*}
\begin{aligned}
\int_{\R}&\(\int_{\R^+}\big[|\varphi_{n_k}|^2|\nabla\theta_k|^2 + 2\RE\(\varphi_{n_k}\theta_k\nabla\bar{\varphi}_{n_k}\nabla\theta_k\)\big]rdr\)dz\\
&\leq \frac{4}{R_k^2} \int_{\R}\(\int_{\R^+}|\varphi_{n_k}|^2rdr\)dz \\
&\qquad+ \frac{4}{R_k}\(\int_{\R}\(\int_{\R^+}|\varphi_{n_k}|^2rdr\)dz\)^{\frac{1}{2}}\(\int_{\R}\(\int_{\R^+} |\nabla \varphi_{n_k}| ^2rdr\)dz\)^{\frac{1}{2}}\rightarrow 0 
\end{aligned}
\end{equation*}
since $R_k\rightarrow +\infty$. Hence, we infer 
\begin{equation}\label{eq:kinetic1-3d}
\left|\|\varphi_{k,1}\|_{\dot{H}^1_m(\R^3)}^2 - \int_{\R}\( \int_{\R^+}\theta_k^2\(|\nabla\varphi_{n_k}|^2 + m^2\frac{|\varphi_{n_k}|^2}{|x|^2}\)rdr\)dz\right| \rightarrow 0.
\end{equation}
In a similar fashion, we have
\begin{equation}\label{eq:kinetic2-3d}
\left|\|\varphi_{k,2}\|_{\dot{H}^1_m(\R^3)}^2 - \int_{\R}\(\int_{\R^+} (1-\theta_k)^2\(|\nabla\varphi_{n_k}|^2 + m^2\frac{|\varphi_{n_k}|^2}{|x|^2}\)rdr\)dz\right| \rightarrow 0.
\end{equation}
From \eqref{eq:kinetic1-3d} and \eqref{eq:kinetic2-3d}, we obtain
\begin{equation*}
\begin{aligned}
\|\varphi_{n_k}\|_{\dot{H}^1_m(\R^3)}^2 &\ge \int_{\R}\(\int_{\R^+} \(\theta_k^2 + (1-\theta_k)^2\)\(|\nabla\varphi_{n_k}|^2 + m^2\frac{|\varphi_{n_k}|^2}{|x|^2}\)rdr\)dz\\
&\ge \|\varphi_{k,1}\|_{\dot{H}^1_m(\R^3)}^2 + \|\varphi_{k,2}\|_{\dot{H}^1_m(\R^3)}^2 + o(1).
\end{aligned}
\end{equation*}
We therefore conclude (by construction) that 
\begin{equation*}
  \liminf_{k\to \infty}\(E_m\(\varphi_{n_k}\)-E_m(\varphi_{k,1})-E(\varphi_{k,2})\)\ge 0,
\end{equation*}
which yields
\begin{align*}
e(\rho) \geq \limsup_{k\to \infty} \(E_m(\varphi_{k,1})+E_m(\varphi_{k,2})\)&\geq \liminf_{k\to \infty} \(E_m(\varphi_{k,1})+E_m(\varphi_{k,2})\)\\
&\geq e(\alpha) + e(\rho-\alpha).
\end{align*}
This contradicts \eqref{eq:sub-add}, thereby ruling out the dichotomy scenario.

  \smallskip
{\bf Step 4:} Hence, we conclude that compactness must occur. In particular, by \cite[Lemma 4]{BV2003}, which is based on the concentration-compactness lemma of \cite{PL284a} (see also \cite[Proposition 1.7.6(i)]{CazCourant}), we have that there exists a sequence $(r_k,z_k)\subset \R^2$, a minimizing subsequence $\varphi_{n_k}$ and a limiting object $\psi \in H^1_m(\R^+\times I)$ such that the mass concentrates on the strips $\R^+\times[-a,a]$ for sufficiently large $a$ (see \cite[Remark 9]{BV2003}). That is, the sequences $(r_k)$ and $(z_k)$ are bounded in the spirit of \cite[Lemma 1.7.4]{CazCourant}. Moreover, the compact embedding for cylindrical symmetry, \cite[Lemma 5]{BV2003} yields that
$\varphi_{n_k}\to \psi$ in $L^p_{\rm rad}(\R^2\times I)$ for all $2< p<6$ up to a (finite) translation. Therefore, using the compactness together with the lower-semicontinuity property of $H^1_m(\R^3)\simeq H^1_{\rm rad}(\R^+\times \R)$ norm, we get
\[
E_m\(\psi\) \leq \liminf_{k\rightarrow \infty} E_m\(\varphi_{n_k}\) = e\(\rho\),
\] which yields the existence of a minimizer for \eqref{eq:Emin}. By the properties of Steiner symmetrization (see, e.g., \cite[Section 4]{BroSo2000}) we deduce that a minimizer $\psi(|x|,z) = \psi(r,|z|)>0$ is non-increasing in $|z|$. 
\end{proof}

The fact that energy minimizers $\psi\in \mathcal{E}(\rho)$ are obtained via the concentration-compactness principle would in principle allow 
one to conclude an analogous orbital stability result as given in Proposition \ref{prop:stability}. 
We will not do so here, however, since this would require us to first establish a global well-posedness result for 
NLS in 3D with a {\it partial} inverse-square potential (recall that in our case the term $r^{-1}$ is always two-dimensional). 
To our knowledge no results (in particular no Strichartz-estimates) 
for such a model are currently available in the literature.

\begin{remark} We finally note that in the case of a nonlinear Klein-Gordon models, it is has been shown that central vortex states with $m\not =0$ cannot be achieved as 
global minimizers of the energy $E(u)$ with fixed mass and nonzero vorticity $L(u)\not =0$, see \cite[Section 2.2]{BBBS}. This is consistent with the results of \cite{GLY}.
\end{remark}


\bigskip

\bibliographystyle{siam}

\end{document}